\documentclass[12pt, reqno]{amsart}
\usepackage{amssymb, amsmath, amsthm, hyperref, graphicx} 

\newtheorem{thrm}{Theorem}
\newtheorem{lemma}{Lemma}
\newtheorem{probl}{Problem}

\textheight220truemm
\textwidth198truemm
\hoffset=-32mm
\voffset=-10mm

\title[Recognition of affine-equivalent polyhedra]{Recognition of affine-equivalent polyhedra\\ by their natural developments}
\author{Victor Alexandrov}
\address{Sobolev Institute of Mathematics, Koptyug ave., 4, 
Novosibirsk, 630090, Russia and Department of Physics, 
Novosibirsk State University, Pirogov str., 2, Novosibirsk, 
630090, Russia}
\email{alex@math.nsc.ru}
\date{December 12, 2022}

\begin{document}

\begin{abstract}
The classical Cauchy rigidity theorem for convex polytopes reads that if two convex polytopes 
have isometric developments then they are congruent.
In other words, we can decide whether two convex polyhedra are isometric or not by using their developments only.
In this article, we study a similar problem about whether it is possible, using only the developments of two convex 
polyhedra of Euclidean 3-space, to understand that these polyhedra are (or are not) affine-equivalent.
\par
\textit{Keywords}:  Euclidean 3-space, convex polyhedron, development of a polyhedron,
Cauchy rigidity theorem, affine-equivalent polyhedra, Cayley--Menger determinant.
\par
\textit{Mathematics subject classification (2010)}: 52C25, 52B10, 51M25.
\end{abstract}

\maketitle

\section{Introduction}\label{sec1}

The famous Cauchy rigidity theorem \cite{AZ18}, \cite[Chapter III]{Al05}, \cite{Ly63}, \cite[Section 6.4]{DO11}
can be formulated as follows: \textit{if the developments of two convex polyhedra in $\mathbb{R}^3$ are isometric, 
then the polyhedra are congruent.} 
Thus, this theorem allows us to answer the question ``are these convex polyhedra congruent?'', operating 
only with the developments of these polyhedra.
It seems natural to us to pose the following problem: ``is it possible to understand that these 
convex polyhedra are (or are not) affine-equivalent if we are given only developments of these polyhedra?''

We are not aware of the statement of this problem or any results about it in the literature.
In this article, we take the first steps in solving it.

Note that, apparently, the problem under study is related not only to the Cauchy rigidity theorem and synthetic geometry, 
but also to the problems of computer vision, pattern recognition, and image understanding.
We refer the interested reader, e.\,g., to the book \cite{HZ04}, although in it, as in other books and articles 
on computer vision known to us, the problem we are studying is not studied directly.

Obviously, if the polyhedra $P$ and $P'$ in $\mathbb{R}^3$ are affine-equivalent, then their corresponding 
faces are also affine-equivalent to each other.
The converse, however, is not true.
To verify this, consider two convex triangular bipyramids $P$ and $P'$ such that the length of every edge of 
$P$ is equal to 1, and the length of one edge of $P'$ connecting a vertex of valency 3 with a vertex of valency 4 
is not equal to 1, while the length of every other edge of $P'$ is equal to 1.
The affine equivalence of the corresponding faces of $P$ and $P'$ obviously follows from the fact that any 
two triangles are affine-equivalent to each other.
However, as is easy to see, $P$ and $P'$ themselves are not affine-equivalent.

Therefore, the main problem studied in this article can be reformulated as follows: 
``what additional conditions on the developments of two convex polyhedra 
(in addition to the condition of the affine equivalence of the corresponding faces) 
guarantee the affine equivalence of the polyhedra themselves?''

The article is organized as follows.
In Section~\ref{sec2}, we refine the terminology and formulate results of other authors 
in a form which is convenient for our purposes.
In Section~\ref{sec3}, we prove that if polyhedra are simple (i.\,e., if exactly three faces are incident to each vertex), 
then the affine equivalence of the faces already implies the affine equivalence of the polyhedra themselves 
(i.\,e., no additional conditions are needed).
In Section~\ref{sec4}, we study our problem for suspensions, i.\,e., for polyhedra combinatorially equivalent 
to regular $n$-gonal convex bipyramids.
In Section~\ref{sec5}, we study an auxiliary local problem of the affine equivalence of two polyhedra, each of which 
is homeomorphic to a disk and contains only three faces.
Finally, in Section~\ref{sec6}, we describe some algorithm for recognizing affine-equivalent polyhedra 
from their natural developments, based on the ideas developed in Sections~\ref{sec3}--\ref{sec5}, suitable for 
polyhedra of any combinatorial structure.
For some pairs of polyhedra, it can certify that they are not affine-equivalent.
Note that this algorithm can be applied not only to convex or closed polyhedra.

\section{Refinement of the terminology}\label{sec2}

In this article, a connected two-dimensional polyhedral surface in Euclidean 3-space composed of a finite number of 
convex polygons is called \textit{polyhedron} and those convex polygons are called its \textit{faces}.
The faces of a polyhedron are not necessarily triangular; 
a polyhedron can either have a non-empty boundary or be closed, 
can be either convex (i.\,e., coincide with the boundary or a part of the boundary of a convex set) or non-convex,
can have an arbitrary topological structure, and may have self-intersections.
All these possibilities are not excluded in our reasoning until the corresponding restriction is explicitly formulated.
On the other hand, we always assume that any polyhedron under study is connected, i.\,e., that
we can go from any face to another by crossing edges, not vertices.
 
Take a convex polyhedron $P$ and cut it along its edges into flat faces. 
We obtain a finite set of convex polygons on the plane.
At the same time, let us remember the ``gluing rules'', i.e., firstly, which edge of the polygon should be glued with 
which edge of another polygon and, secondly, which vertex of one glued edge should be glued with which vertex of the 
other in order to get the original polyhedron.
The finite set of convex polygons thus obtained together with the ``gluing rules'' we call the \textit{natural development} 
of~$P$.
A natural development is uniquely determined by a convex polyhedron.

Let us now be given two combinatorially equivalent convex polyhedra $P$ and $P'$ and their natural developments $R$ and $R'$.
We call $R$ and $R'$ \textit{isometric} if their ``gluing rules'' are compatible with the combinatorial equivalence of 
$P$ and $P'$ and every polygon in $R$ is congruent to the corresponding polygon in $R'$. 
 
Using the notion of natural development of a polyhedron, we can formulate the classical Cauchy rigidity theorem 
for convex polyhedra as follows:

\begin{thrm}\label{th_Cauchy}
Let $P$ and $P'$ be convex closed polyhedra in Euclidean 3-space.
If the natural developments of $P$ and $P'$ are isometric, then $P$ and $P'$ are congruent.
\end{thrm}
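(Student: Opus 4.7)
The plan is to follow Cauchy's classical argument, which splits into a geometric (arm) lemma and a combinatorial (sign-change) lemma. Isometry of the natural developments means that corresponding faces of $P$ and $P'$ are congruent and are glued along corresponding edges in the same combinatorial pattern. Hence, to prove that $P$ and $P'$ are congruent, it suffices to show that for every pair of corresponding edges the dihedral angles in $P$ and $P'$ coincide: once faces agree and dihedral angles agree, one can reconstruct $P$ face by face, producing an orientation-preserving or -reversing isometry of $\mathbb{R}^3$ carrying $P$ onto $P'$.

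Accordingly, I would label each edge of $P$ (equivalently, its image in $P'$) with a sign: $+$ if the dihedral angle in $P$ exceeds the dihedral angle in $P'$, $-$ if it is smaller, and $0$ if the two angles are equal. Assume for contradiction that not every edge bears the label $0$, and let $\Gamma$ denote the subgraph consisting of edges labelled $+$ or $-$, together with the vertices they touch. The crux is to prove, at each vertex $v$ of $\Gamma$, the following vertex-wise assertion: as one traverses the edges of $\Gamma$ meeting $v$ in cyclic order around $v$, the sign changes at least four times.

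To establish this I would pass to the spherical link of $v$. The star of $v$ in $P$ defines a convex spherical polygon whose side lengths are the face angles at $v$ and whose interior angles are the dihedral angles along edges through $v$; the analogous polygon built from $P'$ has the same side lengths (by isometry of the developments) but possibly different interior angles. Convexity of $P$ and $P'$ guarantees convexity of these spherical polygons, so Cauchy's spherical arm lemma applies and excludes fewer than four sign changes among the angle differences. I would then finish by a double-counting on $\Gamma$: Euler's formula for the $2$-sphere, applied to the cellular decomposition of the boundary of $P$, forces the total number of sign changes summed over the vertices of $\Gamma$ to be strictly less than $4|V(\Gamma)|$, contradicting the vertex-wise lower bound and forcing $\Gamma$ to be empty.

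The conceptual heart of the argument, and the step I expect to be the main obstacle, is the spherical arm lemma underlying the four-sign-change property at each vertex; its proof goes by induction on the number of sides of the spherical polygon together with a careful case analysis to keep the opened arm within the region where convexity persists. The combinatorial step on $\Gamma$ is, by contrast, a clean Euler-characteristic computation, and convexity of $P$ and $P'$ enters the entire argument only through the convexity of the vertex links needed to invoke the arm lemma.
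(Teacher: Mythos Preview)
Your outline is the classical Cauchy argument and is correct in its broad strokes. However, the paper does not give its own proof of Theorem~\ref{th_Cauchy}: it merely states the result as background and cites the literature (Cauchy's 1813 paper, Alexandrov's book, Aigner--Ziegler, etc.). So there is no ``paper's proof'' to compare against; your sketch is precisely the standard proof that appears in several of those references, in particular in \cite{AZ18} and \cite[Chapter~III]{Al05}.
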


This theorem was first proved by Augustin-Louis Cauchy in 1813 \cite{Ca13}.
It is considered one of the most striking achievements of geometry.
It has a significant impact on the development of synthetic geometry and is well presented in 
scientific \cite[Chapter III]{Al05}, \cite{Co93}, \cite[Section 23.1]{DO07}, \cite[Section 6.4]{DO11}, \cite{Sa04}, 
educational \cite[Addition K]{Ha49}, \cite[Chapter 24]{Sc11}, 
and popular science \cite[Chapter 14]{AZ18}, \cite{Do98}, \cite[Theorem 24.1]{FT07}, \cite[Chapter III, \S 14]{Ly63}
literature.
Of recent articles using or generalizing Theorem~\ref {th_Cauchy}, we mention article \cite{DSS96}, 
where the local rigidity of zonohedra is proved in $\mathbb{R}^3$;
articles \cite{DSS97}, \cite{DSS99}, \cite{Sh99}, where the rigidity of a polyhedron is proved  in $\mathbb{R}^3$
provided that it is homeomorphic to the sphere, torus or pretzel and all its faces are unit squares;
article \cite{BBK19a}, in which Theorem~\ref{th_Cauchy} carries over to the case of circular polytopes in 
the 2-sphere $\mathbb{S}^{2}$;
and article \cite{Mo20}, in which a rather unexpected application of Theorem~\ref{th_Cauchy} is given to find 
a sufficient condition for a convex polyhedron $P$ to realize by means of isometries of the ambient space $\mathbb{R}^3$ 
all ``combinatorial symmetries'' of $P$, i.\,e., all maps of the natural development of $P$ onto itself that preserve edge lengths.

A convex polyhedron can be regarded as a metric space if we put by definition that the distance between any 
two points is equal to the infimum of the lengths of the curves connecting these points and lying entirely 
on the polyhedron.
Each point of this metric space, other than a vertex of the polyhedron, has a neighborhood isometric 
to a disk on the plane $\mathbb{R}^2$.
We can say that, after removing a finite number of points, this space is locally Euclidean.
It is convenient to represent such metric space as a disjoint union of a finite number of convex polygons 
in $\mathbb{R}^2$, the edges of which are identified so that each edge belongs to exactly two polygons, 
and the lengths of any edge segment calculated in each of these two polygons are the same.
We call such a disjoint union of a finite number of convex polygons in $\mathbb{R}^2$ together with the edge 
identification rule \textit{connected} if, starting from any polygon, it is possible to pass to any other, 
successively crossing the sides (not vertices) of the polygons of this disjoint union.
Finally, a connected disjoint union of a finite number of convex polygons in $\mathbb {R}^2$, together 
with the edge identification rule, we call an \textit {abstract development}. 

Obviously, an abstract development can be constructed for any polyhedron, not just for a convex one.
(moreover, to construct an abstract development, there is no need to start with a polyhedron; 
we can immediately start with a set of convex polygons on the plane).
It is also obvious that any natural development of a polyhedron is its abstract development.
Fig.~\ref{fig1} shows that the converse is not true.
Indeed, in Fig.~\ref{fig1},~a a regular tetrahedron $P$ with vertices $x_0, x_1, x_2, x_3$ is shown.
In Fig.~\ref{fig1},~b and Fig.~\ref{fig1},~c  two developments of $P$ with vertices
$\widetilde{x}_0, \widetilde{x}_1, \widetilde{x}_2, \widetilde{x}_3$ are shown.
Moreover, for each $j=0,1,2,3$, the vertex $x_j$ of $P$ corresponds to the vertex $\widetilde{x}_j$ 
of its development.
The development shown in Fig.~\ref{fig1},~b is natural.
In contrast, the abstract development of $P$ shown in Fig.~\ref{fig1},~c, is not natural.
The latter follows, e.\,g., from the fact that triangle $\widetilde{x} _1 \widetilde{x}_2 \widetilde{x}_0$
on the abstract development shaded in Fig.~\ref{fig1},~c  is located in two faces $x_1 x_2 x_3$ and
$x_2 x_3 x_0$ of $P$ simultaneously.
In this case, it is said that the face $\widetilde{x}_1 \widetilde{x}_2 \widetilde{x} _0$ and 
the edge $\widetilde{x}_1 \widetilde{x}_0$ of the abstract development ``break'' and are not a 
true face and a true edge of $P$, respectively.

\begin{figure}
\begin{center}
\includegraphics[width=0.9\textwidth]{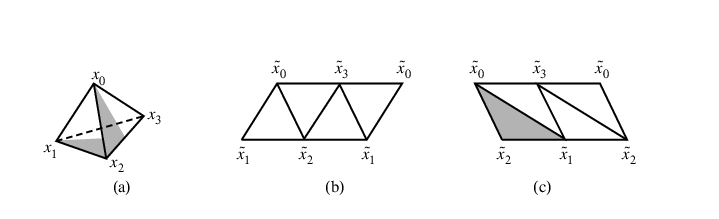}
\end{center}
\caption{(a): a regular tetrahedron $P$. (b): a natural development of $P$. 
(c): an abstract development of $P$, which is not natural.}\label{fig1}
\end{figure}

A.D.~Alexandrov proved \cite[Chapter III]{Al05} the following generalization of Theorem~\ref{th_Cauchy}:

\begin{thrm}\label{th_AD_uniqueness}
Let $P$ and $P'$ be convex closed polyhedra in Euclidean 3-space, and let $R$ and $R'$ be any of their abstract developments.
If $R$ and $R'$ are isometric, then $P$ and $P'$ are congruent.
\end{thrm}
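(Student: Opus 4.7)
The plan is to reduce Theorem~\ref{th_AD_uniqueness} to the classical Cauchy theorem (Theorem~\ref{th_Cauchy}) by passing to a common refinement of $R$ and $R'$ on which both developments become natural. The isometry between $R$ and $R'$, being a combinatorial bijection that pairs congruent polygons with matching edge identifications, glues into an intrinsic isometry $\Phi$ from the polyhedral surface $\partial P$ onto the polyhedral surface $\partial P'$. A point on the surface of a convex polyhedron is a true vertex if and only if its angle defect is positive, and this is an invariant of the intrinsic metric; hence $\Phi$ carries the vertex set of $P$ bijectively onto the vertex set of $P'$.

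Next, I would refine the face structures on both sides in a way compatible with $\Phi$. Each true edge of $P$ is a geodesic arc on $\partial P$ joining two vertices, and its $\Phi$-image is a geodesic arc on $\partial P'$ joining two vertices of $P'$; in general this image is piecewise linear, crossing faces of $P'$ along finitely many line segments. On $\partial P$ I would take the union of all true edges of $P$ with the $\Phi^{-1}$-images of all true edges of $P'$, and symmetrically on $\partial P'$. After further subdividing any non-convex cells by diagonals chosen consistently on both sides, one obtains natural developments $\widetilde R$ and $\widetilde R'$ of refined polyhedra $\widetilde P$ and $\widetilde P'$ whose underlying convex bodies coincide with $P$ and $P'$, the newly introduced edges carrying dihedral angle $\pi$. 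By construction, $\Phi$ maps $\widetilde R$ isometrically onto $\widetilde R'$ face by face.

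Finally, Theorem~\ref{th_Cauchy} applied to the convex closed polyhedra $\widetilde P$ and $\widetilde P'$, whose natural developments are now isometric, yields $\widetilde P \cong \widetilde P'$, and therefore $P \cong P'$.

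The principal obstacle is the refinement step. One must verify that the geodesic arcs used to refine $\partial P$, and symmetrically $\partial P'$, genuinely produce a cell decomposition into convex polygons after a consistently chosen diagonal subdivision; this rests on the fact that $\Phi$ preserves vertices together with a careful combinatorial analysis of how the transported arcs meet the existing edge structure, as worked out in Alexandrov's original argument in \cite[Chapter III]{Al05}. A secondary technical point is that Theorem~\ref{th_Cauchy} is being applied to polyhedra with many dihedral angles equal to $\pi$, which is accommodated by the standard extension of Cauchy's sign-change lemma that permits zero labels along flat edges.
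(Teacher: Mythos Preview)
The paper does not supply its own proof of Theorem~\ref{th_AD_uniqueness}; it merely attributes the result to A.~D.~Alexandrov and cites \cite[Chapter~III]{Al05}. Hence there is no in-paper argument to compare against.

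That said, your outline is essentially the classical route that Alexandrov himself follows in the cited chapter: the combinatorial isometry of abstract developments induces an intrinsic isometry $\Phi:\partial P\to\partial P'$; true vertices are detected intrinsically by positive curvature, so $\Phi$ matches vertex sets; one then superimposes the two edge graphs (transporting edges of $P'$ back via $\Phi^{-1}$ and vice versa) to produce a common subdivision on which both developments are natural, and finishes with Cauchy. The two technicalities you single out---that the superimposed geodesic arcs yield a genuine convex-polygon cell decomposition after consistent diagonal cuts, and that Cauchy's sign-change lemma must accommodate edges carrying dihedral angle $\pi$---are exactly the points Alexandrov treats with care in \cite[Chapter~III]{Al05}. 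Your proposal is therefore a faithful sketch of the standard proof, appropriate as a reference to the literature though not self-contained.
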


In other words, Theorem~\ref{th_AD_uniqueness} shows that in Theorem~\ref{th_Cauchy} there is no need to check that 
the developments of $P$ and $P'$ are natural.

Theorem~\ref{th_AD_uniqueness} is interesting not only in itself, but also plays an essential role in the proof of 
the following theorem on the existence of a convex polyhedron, also proved by A.D.~Alexandrov~\cite[Chapter IV]{Al05}:

\begin{thrm}\label{th_AD_existence}
From any abstract development homeomorphic to the sphere and having the sums of the angles at the vertices 
$\leqslant 2\pi$, one can glue a closed convex polytope in $\mathbb{R}^3$.
\end{thrm}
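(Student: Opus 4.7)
The plan is to use Alexandrov's \emph{continuity} (or \emph{deformation}) \emph{method}, combining Theorem~\ref{th_AD_uniqueness} as the injectivity input with Brouwer's invariance of domain. First, I would triangulate the given abstract development $R$ by adding diagonals to each non-triangular face; this does not change the intrinsic metric, and the new vertices are flat, i.e.\ carry angle defect $0$. After this reduction $R$ is a simplicial triangulation of the sphere with $n$ marked vertices, $3n-6$ edges and $2n-4$ triangular faces, in which some vertices are genuine (angle sum strictly less than $2\pi$) and the remaining ones are artificial (angle sum exactly $2\pi$). By Gauss--Bonnet the total angle defect equals $4\pi$.

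I would then set up two moduli spaces of the same dimension $3n-6$. Let $\mathcal{M}_n$ be the space of abstract developments with the fixed combinatorial triangulation above and angle sums $\leq 2\pi$, considered up to marked isometry and locally parametrized by the $3n-6$ edge lengths, subject to the triangle inequalities on each face and the non-negativity of each vertex defect. Let $\mathcal{P}_n$ be the space of convex polytopes (possibly degenerate to a doubly covered convex polygon) in $\mathbb{R}^3$ carrying $n$ marked points on the surface, considered up to rigid motion and parametrized by $3n$ coordinates modulo a $6$-dimensional group. The intrinsic-metric map $\Phi\colon \mathcal{P}_n\to\mathcal{M}_n$ is continuous and, by Theorem~\ref{th_AD_uniqueness}, injective, so Brouwer's invariance of domain forces $\Phi$ to be an open map between these equidimensional manifolds.

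The remainder of the proof would establish three things: that $\mathcal{M}_n$ is path-connected (any two admissible edge-length vectors are joined by a linear segment after a small perturbation keeping the triangle and defect inequalities strict); that $\Phi(\mathcal{P}_n)$ is nonempty (e.g.\ any explicit convex polytope with $n-4$ extra flat marker points placed on its faces provides such a realization); and that $\Phi$ is proper, so that its image is closed. Openness, closedness, nonemptiness and connectedness then combine to give $\Phi(\mathcal{P}_n)=\mathcal{M}_n$, so every admissible development arises from a convex polytope.

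The main obstacle is the properness step. \emph{A priori} a sequence of polytopes realizing a convergent sequence of developments could become unbounded, flatten onto a plane, or have marked points collide. I would rule these out using that the intrinsic diameter and the pairwise geodesic distances of the marked points stay bounded along a convergent sequence in $\mathcal{M}_n$, and then invoke a Blaschke-type selection theorem after normalizing the rigid motions by fixing one marked point at the origin and a second on a coordinate half-line. The delicate point is that the limit must lie in $\mathcal{P}_n$ itself rather than on its boundary; this requires matching vanishing edges of the limit polytope to collapses that are already visible on the limit abstract development, so that a genuine degeneration of the polytope forces a genuine degeneration of its metric.
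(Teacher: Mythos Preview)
Your outline is precisely the continuity method that the paper attributes to Alexandrov: the paper does not supply its own proof of Theorem~\ref{th_AD_existence} but explicitly records that ``A.D.~Alexandrov deduced Theorem~\ref{th_AD_existence} from Brouwer's domain invariance theorem and Theorem~\ref{th_AD_uniqueness}'', which is exactly your injectivity-plus-invariance-of-domain scheme. So there is no discrepancy in approach; your proposal simply expands the one-line summary the paper gives, and the technical caveats you flag (manifold structure of the two moduli spaces, properness of $\Phi$, handling of degenerate doubly-covered polygons) are indeed the places where the full argument in \cite[Chapter~IV]{Al05} does real work.
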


The concept of the ``sum of the angles at the vertex $\widetilde{x}$ of an abstract development'' used in 
Theorem~\ref{th_AD_existence} is intuitively obvious.
To find this sum, denoted by $\delta_ {\widetilde{x}}$, you need to select flat angles with the vertex~$\widetilde{x}$
in all polygons of the abstract development and calculate the sum of the values of these angles.
The quantity $2\pi-\delta_ {\widetilde {x}}$ is called the \textit{curvature} of the vertex~$\widetilde{x}$.
Theorem~\ref{th_AD_existence} actually requires that the curvature of each vertex be non-negative.

Note also that the example of an abstract development shown in Fig.~\ref{fig1},~c shows that if in the statement 
of Theorem~\ref{th_AD_existence} we replace the words ``abstract development'' with the words ``natural development'', 
then we get a wrong statement.
That is, we can say that Theorem~\ref{th_AD_existence} is true precisely because the faces and edges of 
the development are allowed to break in the process of its isometric realization in the form of a polyhedron.

For completeness, we mention that at present there are several substantially different proofs 
of Theorem~\ref{th_AD_existence}.

Initially Theorem~\ref{th_AD_existence} was published by A.D.~Alexandrov in 1941 in a short note~\cite{Al41}, 
then in 1942 in a detailed article~\cite{Al42}, and later was included in his book~\cite{Al05}, first published in 1950.
A.D.~Alexandrov deduced Theorem~\ref{th_AD_existence} from Brouwer's domain invariance theorem and Theorem~\ref{th_AD_uniqueness}.
He also showed that, by approximating any metric of positive curvature by polyhedral metrics of positive curvature 
and then passing to the limit, we obtain the positive solution to the generalized H. Weyl problem, 
i.\,e., that we can assert that any two-dimensional sphere-homeomorphic manifold with an intrinsic metric 
of positive curvature admits an isometric embedding into $\mathbb{R}^3$ by means of a closed convex surface
(which is not necessarily smooth and may even degenerate into a doubly covered convex domain in the plane).
Various aspects of the Weyl problem (such as the existence, stability and smoothness of a surface depending 
on the smoothness of the metric) are studied in numerous books and articles, see, e.\,g.,
\cite{Po73}, \cite{St78}, \cite{GMT14}, \cite{GL17} and references given there.

In 1943 at the Steklov Mathematical Institute of the Academy of Sciences of the USSR, L.A.~Lyu\-sternik gave a talk 
(see~\cite[p.~372]{BVK73}) in which he showed that Theorem~\ref{th_AD_existence} follows from already known facts 
(see~\cite{Le38}) about the Weyl problem for analytic metrics.
Namely, he proposed to approximate any polyhedral metric of positive curvature by analytic metrics of positive curvature, 
then realize the latter by means of convex analytic surfaces, and finally, passing to the limit, obtain the desired convex
polyhedron which realizes the original polyhedral metric.
Therefore, the method proposed by L.A.~Lyusternik is usually called analytical; it is presented in detail in \cite[\S~12]{Ef48}.

Another proof of Theorem~\ref{th_AD_existence} was proposed by Yu.A.~Volkov in his Ph.D. thesis, defended at Leningrad 
State University in 1955 under the direction of A.D.~Alexandrov.
The proof proposed by Yu.A.~Volkov is usually called variational, since it is based on the fact that the desired 
convex polyhedron delivers the minimum to a function defined on a space of ``3-dimensional developments'' (the latter was specially invented by Yu.A.~Volkov for this proof).
It is presented in brief in \cite{Vo56} and \cite{Vo60}, and in details in \cite{VP72} and \cite{Vo20}.
Among current publications, we mention the articles \cite{BI08} and \cite{Iz08}, where proofs of Theorem~\ref{th_AD_existence} are  proposed, which are also based on solving some variational problem and are close in spirit to the proof proposed by Yu.A.~Volkov.

Despite the significance of the concept of the abstract development of a polyhedron demonstrated above using the example of 
Theorem~\ref{th_AD_existence} and the Weyl problem, in this article we only deal with natural developments 
of polyhedra.
The fact is that the problem of recognizing whether a given abstract development is natural or not is in 
itself very difficult and we do not expect to contribute here to its solution.
To justify this fact, let us quote A.D.~Alexandrov: 
``To determine the structure of a polyhedron from an [abstract] development, i.\,e., to indicate its genuine edges in the
development, is a problem whose general solution seems hopeless'' \cite[Section 2.3.3]{Al05}.
As far as we know, even now there is very little progress in solving this problem with ``hopeless'' general solution.

The only particular case we know of, in which it is possible to directly indicate the true edges of a polyhedron on its 
abstract development, is described in~\cite{Sh12}.
It is proved there that if an abstract development~$R$ satisfies the conditions of Theorem~\ref{th_AD_existence} 
and there exists a simple edge cycle $\gamma\subset R$ consisting of shortest paths connecting in series all vertices of~$R$ 
with positive curvature, and moreover for any two of vertices with positive curvature that are not connected by an edge of~$\gamma$ 
there are exactly two shortest paths connecting them, then the convex polyhedron~$P$ existing by Theorem~\ref{th_AD_existence} 
is degenerate in the sense that it is located in a plane, has only two faces, $\gamma$ is the common boundary of these 
faces, and~$\gamma$ consists of all true edges of~$P$.
 
At the same time, researchers in various fields of science proposed
a number of computer algorithms for reconstructing the spatial shape of a convex surface from its 
internal metric, see, e.\,g., \cite{BI08}, \cite{JK12}, \cite{KPD09}, \cite{RMA15}, \cite{TMM14}. 
Some of these algorithms work for polyhedra as well.
Having computed the spatial shape of a polyhedron, we, in a sense, can claim that we have found its true edges as well.
However, all of these algorithms are numerical.
Among them, the one proposed in~\cite{KPD09} is distinguished by a particularly thorough theoretical study.
It is based on solving a differential equation, derived in~\cite{BI08},
it allows one to compute the spatial shape of a polyhedron with arbitrary precision from its abstract development, 
and, in addition, a pseudopolynomial bound on its running time is proved.

Summarizing what was said above about the relationship between the concepts of abstract and natural developments, 
we can say that our desire to deal with natural developments is by no means accidental.

Let us continue to clarify the terminology used in this article.
We say that two combinatorially equivalent polyhedra~$P$ and~$P'$ in~$\mathbb{R}^3$ are 
\textit{combinatorially-affine equivalent} (or, for short, \textit{co-affine}), if there 
is a nondegenerate affine transformation $\mathbb{R}^3\to\mathbb{R}^3$, which maps $P$ onto $P'$, and maps the
vertices, edges and faces of~$P$ onto the corresponding (due to the combination equivalence of $P$ and $P'$) 
vertices, edges and faces of $P'$.
In this case, the mapping $A$ is also called \textit{combinatorial-affine} or, for brevity, \textit{co-affine}.

Affine-equivalent polygons are defined similarly. For short, we call them co-affine polygons.

We can now clarify the main problem studied in this article:

\begin{probl}\label{main_probl}
\emph{Given two combinatorially equivalent polyhedra~$P$ and~$P'$ in~$\mathbb{R}^3$
whose corresponding faces are co-affine, what additional conditions on natural developments of $P$ and~$P'$ guarantee that 
$P$ and~$P'$ are co-affine or, conversely, are not co-affine?}
\end{probl}

In Sections~\ref{sec3}--\ref{sec6} below we obtain partial solutions to Problem~\ref{main_probl} for 
some classes of polyhedra in Euclidean 3-space.
Wherein, we will use the so-called Cayley--Menger determinants and their properties.
Let us recall them in a formulation and notation convenient for us.

Let $k\geqslant 2$ and $x_0, x_1, \dots, x_k$ be arbitrary points in $\mathbb{R}^k$.
We denote by $d_{ij}=d(x_i,x_j)$ the Euclidean distance between $x_i$ and $x_j$, $i,j=0,1,\dots, k$. 
The determinant 
\begin{equation}\label{eqn2:1}
\textrm{cm}\,(x_0, x_1, \dots, x_k)\stackrel{\textrm{def}}{=}
\left|
\begin{array}{ccccc}
0 & 1           & 1            & \dots       & 1         \\
1 & 0           & d^2_{01}     & \dots       & d^2_{0k}  \\
1 & d^2_{10}    & 0            & \dots       & d^2_{1k}  \\
. & .           & .            & .           & .         \\
1 & d^2_{k0}    & d^2_{k1}     & \dots       & 0 
\end{array}
\right|
\end{equation} 
is called the \textit{Cayley--Menger determinant} of the points $x_0,x_1,\dots, x_k$, see, e.\,g., \cite[Formula 9.7.3.2] {Be87}.
However, sometimes we will say that formula (\ref{eqn2:1}) defines the Cayley--Menger determinant of the simplex $T$ with 
the vertices $x_0, x_1, \dots, x_k$, and denote it by $\textrm{cm}\,(T)$.

We need two properties of the Cayley--Menger determinant:

(a) $k$-dimensional volume $\textrm{vol\,}(x_0, x_1, \dots, x_k)$ of the simplex with the vertices $x_0, x_1, \dots, x_k$ is
related to the Cayley--Menger determinant of the points $x_0, x_1, \dots, x_k$ by the following formula \cite[Lemma 9.7.3.3]{Be87}:
\begin{equation}\label{eqn2:2}
[\textrm{vol\,}(x_0, x_1, \dots, x_k)]^2=
\frac{(-1)^{k+1}}{2^k k!}
\textrm{cm}\,(x_0, x_1, \dots, x_k);
\end{equation}

(b) let $d_ {ij}$ ($i,j=0,1,\dots, k$) be a collection of $k(k+1)/2$ arbitrary positive real numbers
such that  $d_{ij}=d_{ji}$ and $d_{ii}=0$.
The following inequality 
\begin{equation}\label{eqn2:3}
(-1)^{k+1}
\left|
\begin{array}{ccccc}
0 & 1           & 1            & \dots       & 1         \\
1 & 0           & d^2_{01}     & \dots       & d^2_{0k}  \\
1 & d^2_{10}    & 0            & \dots       & d^2_{1k}  \\
. & .           & .            & .           & .         \\
1 & d^2_{k0}    & d^2_{k1}     & \dots       & 0 
\end{array}
\right|>0
\end{equation} 
is a necessary and sufficient condition for the existence of a nondegenerate 
(i.\,e., not contained in any hyperplane) simplex (whose vertices we denote by $x_0, x_1, \dots, x_k $)
in $\mathbb{R}^k$ such that $d_{ij}$ is equal to the Euclidean distance between~$x_i$ and $x_j$,
see \cite[Theorem 9.7.3.4, Remark 9.7.3.5 and Exercise 9.14.23]{Be87}.

Finally, let us agree on some more notation used in Sections \ref{sec3}--\ref{sec6} without additional explanations.
Given a polyhedron $P$, its development $R$ (it does not matter natural or abstract), and a vertex $x$ of $P$, 
we denote by $\widetilde{x}$ the vertex of $R$, corresponding to $x$.
If $y$ is another vertex of~$P$, then we denote by $d(x,y)$ the Euclidean distance between the points $x,y$ in $\mathbb{R}^3$, 
and by $\rho(\widetilde{x},\widetilde{y})$ the distance between $\widetilde{x},\widetilde{y}$ in~$R$.
If, besides $P$, a polyhedron $P'$ is given, which is combinatorially equivalent to $P$,
and $R'$ is a development of $P'$, then we denote by $x'$ the vertex of $P'$ corresponding, 
by virtue of combinatorial equivalence, to the vertex $x$ of $P$, and by $\widetilde{x}'$ the vertex of $R'$, 
corresponding (due to the construction of development) to the vertex $x'$ of $P'$. 
If $y'$ is another vertex of $P'$, then by $d(x',y')$ we denote the Euclidean distance
between $x',y'$ in $\mathbb{R}^3$, and by $\rho'(\widetilde{x}',\widetilde{y}')$ 
the distance between $\widetilde{x}', \widetilde{y}'$ in $R'$.

\section{Simple polyhedra}\label{sec3}

A closed convex polyhedron is called \textit{strictly convex} if none of its dihedral angles is equal to $\pi$.
A strictly convex closed polyhedron is called \textit{simple} if any of its vertices is incident to exactly three edges 
(or, which is the same, exactly three faces).
Examples of simple closed polyhedrons are well known: they are three of five Platonic solids (namely: the tetrahedron, the cube,
and the dodecahedron), six of thirteen Archimedean solids (namely: the truncated tetrahedron, the truncated cube, 
the truncated octahedron, the truncated cuboctahedron, the truncated icosahedron, and the truncated icosidodecahedron, 
see \cite{We71} and \cite{Za69}), as well as every strictly convex $n$-gonal prism for any $n\geqslant 3$.

\begin{thrm}\label{th_simple}
Let $P$ and $P'$ be simple polyhedra in $\mathbb{R}^3$ combinatorially equivalent to each other
and let the corresponding faces of the natural developments of $P$ and $P'$ be co-affine.
Then $P$ and $P'$ are co-affine.
\end{thrm}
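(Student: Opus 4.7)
The plan is to construct a global affinity directly and then verify it works. I would pick a vertex $v$ of $P$ together with its three neighbors $v_1, v_2, v_3$. Strict convexity of $P$ forces $\{v, v_1, v_2, v_3\}$ to be affinely independent: otherwise the three faces at $v$, each spanned by $v$ and two of the $v_i$, would lie in a common plane, making the three dihedral angles along the edges $vv_i$ equal to $\pi$ and contradicting strict convexity. The same argument applied to $P'$ shows $\{v', v_1', v_2', v_3'\}$ is affinely independent, so the unique affine map $A \colon \mathbb{R}^3 \to \mathbb{R}^3$ sending $v \mapsto v'$ and $v_i \mapsto v_i'$ for $i = 1, 2, 3$ exists and is nondegenerate.

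Next I would show that $A$ carries every vertex of $P$ to its counterpart in $P'$. Call a face $F$ of $P$ \emph{good} if $A$ sends each vertex of $F$ to the corresponding vertex of $F'$. The hypothesis supplies, for every $F$, a planar affine equivalence $\phi_F \colon F \to F'$ that realizes the combinatorial vertex correspondence; since a planar affine map is determined by its action on three non-collinear points, any face three of whose vertices are already mapped correctly by $A$ must satisfy $A|_{\mathrm{plane}(F)} = \phi_F$ and is therefore good. In particular the three faces at $v$ are good, because each of them contains $v$ together with two of the $v_i$.

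The engine of the argument is the following local propagation lemma: at any vertex $x$ of $P$, if two of the three faces incident to $x$ are good, so is the third. Indeed, label the edges at $x$ as $xy_1, xy_2, xy_3$ and the three faces so that the $i$-th face contains the two edges other than $xy_i$; if faces $1$ and $2$ are good then $x, y_1, y_2, y_3$ are all mapped correctly by $A$, so $y_1, x, y_3$ are three non-collinear consecutive vertices of the third face, and the principle of the previous paragraph makes it good. To finish, I would use that the combinatorial dual of a simple strictly convex closed polyhedron is a triangulation of the sphere whose triangles correspond to the vertices of $P$. The lemma says exactly that the set of good faces is closed under completing a triangle in this dual; equivalently, any dual triangle sharing an edge with an already-good one is itself good. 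Since edge-adjacency of triangles connects the entire triangulated sphere, propagation from the initial good triangle at $v$ reaches every face, so $A$ is the required affine equivalence.

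The main obstacle I anticipate is really only combinatorial bookkeeping: the geometric content is concentrated entirely in the affine-independence at the single vertex $v$, which strict convexity hands us for free, while the global statement then follows from connectedness of the dual triangulation.
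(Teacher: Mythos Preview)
Your proof is correct and takes essentially the same approach as the paper: the paper defines a local affine map $A_x$ at every vertex and shows $A_x = A_y$ along each edge of $P$, while you fix a single $A$ at one vertex and propagate ``goodness'' of faces through the dual triangulation, but these are just primal and dual phrasings of the same connectedness argument built on the same two ingredients (affine independence at a vertex from strict convexity, and agreement of $A$ with the given planar affinity $\phi_F$ once three non-collinear vertices of $F$ are controlled). One harmless slip: in your propagation lemma the third face at $x$ has consecutive vertices $y_1, x, y_2$ rather than $y_1, x, y_3$, though this does not affect the logic since you had already established that all four of $x, y_1, y_2, y_3$ are mapped correctly.
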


\begin{proof} 
Given a vertex $x$ of $P$, construct an affine transformation $A_x:\mathbb{R}^3 \to\mathbb{R}^3$ as follows.
Since $P$ is assumed to be simple, there are exactly three of its vertices incident to $x$. 
Let us denote them by $x_1, x_2, x_3$.
Since $P$ is assumed to be strictly convex, the linear span of three vectors $x_1-x, x_2-x, x_3-x$ coincides 
with $\mathbb{R}^3$, i.\,e., these vectors are linearly independent.
Similarly, since $P'$ is assumed to be strictly convex, vectors $x'_1-x', x'_2-x ', x'_3-x'$ are also linearly independent.
Therefore, there is a unique affine transformation $A_x:\mathbb {R}^3 \to \mathbb{R}^3$ such that $A_x (x_j) = x'_j$ 
for each $j = 1,2,3$, and $A_x (x) = x'$.
 
Let $F$ denote an arbitrary face of $P$ incident to $x$.
For definiteness, let $F$ is incident to $x, x_1, x_2 $.
Let $F'$ denote the face of $P'$ corresponding to $F$ by combinatorial equivalence.
Then $F'$ is incident to $x', x'_1, x'_2$.
Let us verify that
\begin{equation}\label{eqn3:1}
A_x(F)=F'.
\end{equation}
Indeed, on the natural development of $P$ there is a polygon corresponding to $F$.
Let us denote it by $\widetilde{F}$.
Due to the construction of natural development, $F$ and $\widetilde{F}$ are congruent.
Similarly, on the natural development of $P'$, there is a polygon $\widetilde{F}'$ corresponding to $F'$.
It is congruent to $F '$.
By the hypothesis of Theorem \ref{th_simple}, $\widetilde{F}$ and $\widetilde{F}'$ are co-affine.
Hence the faces $F$ and $F'$ are co-affine.
This means that there is an affine mapping $B_F: \textrm{aff}(F)\to \textrm{aff}(F')$ that maps the affine hull 
$\textrm{aff}(F)$ of $F$ into the affine hull $\textrm{aff}(F')$ of $F'$ and is such that
$B_F(F)=F'$, $B_F(x)=x'$,  $B_F(x_1)=x'_1$, and  $B_F(x_2)=x'_2$. 
Since $A_x(x)=x'=B_F(x)$, $A_x(x_1)=x'_1=B_F(x_1)$ and $A_x(x_2)=x'_2=B_F(x_2)$, the restriction of 
$A_x$ to $\textrm{aff}(F)$ coincides with $B_F$. 
Thus, $A_x(F)=B_F(F)=F'$, i.\,e., (\ref{eqn3:1}) is proved.

Now let us make sure that if the vertices $x$ and $y$ of the polyhedron $P$ are connected by an edge then
\begin{equation}\label{eqn3:2}
A_x=A_y.
\end{equation} 
For this, as before, we denote by $x_1, x_2, x_3$ the three vertices of $P$ incident to $x$.
For definiteness, we assume that $y=x_3$.
Similarly, let us denote by $y_1, y_2, y_3$ the three vertices of $P$ incident to $y$.
For definiteness, we assume that $x=y_3$ and the vertices $x_1, x=y_3, y=x_3, y_2$ are incident to a face of $P$, 
while the vertices $x_2, x=y_3, y=x_3, y_1$ are incident to another face of $P$.
Denote by $F_1$ the face of $P$ incident to $x_1, x=y_3, y=x_3, y_2$, and denote by
$\textrm{aff}(F_1)$ the affine hull of $F_1$.
Denote by $F'_1$ the face of the polyhedron $P'$ corresponding to $F_1$ by virtue of combinatorial equivalence of $P$ and $P'$.
Denote by $\textrm{aff}(F'_1)$ the affine hull of $F'_1$.
Arguing as in the previous paragraph, we are convinced that there is an affine mapping
$B_{F_1}:\textrm{aff}(F_1)\to \textrm{aff}(F'_1)$,
which coincides with both $A_x$ and $A_y$ on $F_1$.
This implies that $A_x(x_1)=x'_1=A_y(x_1)$.
In a similar way, using the face $F_2$, which is incident to $x_2, x=y_3, y=x_3$, we make sure that 
$A_x(x_2)=x'_2=A_y(x_2)$.
Thus, we made sure that the affine transformations $A_x$ and $A_y$ coincide with each other at $x,x_1,x_2,x_3$.
Taking into account that the vectors $x_1-x,x_2-x,x_3-x$ are linearly independent, we obtain (\ref{eqn3:2}).

Finally, let us make sure that the transformation $A_x$ is the same for all vertices $x$ of $P$.
Indeed, since $P$ is connected, any two of its vertices can be connected by an edge path.
If the vertex $y$ follows the vertex $x$ in this edge path, then, as proved in the previous paragraph, $A_y=A_x$.
Hence, $A_x$ is the same at the initial and final vertices of the edge path, i.\,e., is independent of $x$.
Let us denote this transformation by $A$.
Then $A$ is an affine transformation that maps $P$ to $P'$ in such a way that the elements of $P$ are mapped to 
the corresponding (due to combinatorial equivalence) elements of $P'$. Thus, $P$ and $P'$ co-affine.
\end{proof}

Theorem~\ref{th_simple} shows that, for simple polyhedra, the co-affinity of faces already implies the co-affinity of polyhedra.
From the above proof of Theorem~\ref{th_simple}, it is clear that, with a suitable refinement of the terminology, 
similar statements can be formulated and proved both for some nonconvex polyhedra and for projectively equivalent polyhedra.
Of all the possible generalizations of Theorem~\ref{th_simple}, we explicitly state only one that is applicable not 
only to simple polytopes. Here it is:

\begin{thrm}\label{th_almost_simple}
Let strictly convex closed polyhedra $P$ and $P'$ in $\mathbb{R}^3$ have the same combinatorial structure, 
and their corresponding faces are co-affine. 
And let there is an edge path $\gamma$ on $P$ such that
\par
\emph{(i)} each vertex of $\gamma$ is incident to exactly three edges of $P$, and
\par
\emph{(ii)} for every face of $P$, there exists a vertex of $\gamma$ incident to this face.
\par
\noindent{Then} $P$ and $P'$ are co-affine.
\end{thrm}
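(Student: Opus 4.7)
The plan is to adapt the proof of Theorem~\ref{th_simple} almost verbatim, using conditions~(i) and~(ii) as the analog of the global trivalence assumption there: condition~(i) supplies exactly what is needed to build a local affine map $A_x$ at each vertex $x$ of $\gamma$, and condition~(ii) supplies exactly what is needed to extend the resulting map across every face of $P$.

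The local step mirrors the proof of Theorem~\ref{th_simple}. For each vertex $x$ of $\gamma$, condition~(i) and strict convexity of $P$ and $P'$ give three neighbors $x_1,x_2,x_3$ of $x$ with $x_i-x$ (resp.\ $x_i'-x'$) linearly independent, so there is a unique affine map $A_x\colon\mathbb{R}^3\to\mathbb{R}^3$ with $A_x(x)=x'$ and $A_x(x_i)=x_i'$. Repeating the face argument from Theorem~\ref{th_simple}, for each face $F$ of $P$ incident to $x$ the affine equivalence of the corresponding faces of the natural developments produces an affine map $B_F\colon\mathrm{aff}(F)\to\mathrm{aff}(F')$ agreeing with $A_x$ on three non-collinear points of $F$ (namely $x$ and the two neighbors of $x$ lying in $F$); hence $A_x\vert_{\mathrm{aff}(F)}=B_F$ and $A_x(F)=F'$.

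The propagation along $\gamma$ also mirrors Theorem~\ref{th_simple}. For consecutive vertices $x,y$ of $\gamma$, both trivalent by~(i), the two faces $F_1,F_2$ meeting at edge $xy$ each contain both $x$ and $y$, and the previous step shows that $A_x$ and $A_y$ each coincide with $B_{F_1}$ on $\mathrm{aff}(F_1)$ and with $B_{F_2}$ on $\mathrm{aff}(F_2)$. Strict convexity forces $\mathrm{aff}(F_1)\neq\mathrm{aff}(F_2)$; the union of these two planes affinely spans $\mathbb{R}^3$, so $A_x=A_y$ as affine self-maps of $\mathbb{R}^3$. Iterating along the edge path $\gamma$ produces a single affine transformation $A$ with $A=A_x$ for every vertex $x$ of $\gamma$. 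Condition~(ii) then furnishes, for each face $F$ of $P$, some vertex $x$ of $\gamma$ incident to $F$, so $A(F)=A_x(F)=F'$; since $P$ is the union of its faces, $A$ is a nondegenerate affine transformation carrying $P$ onto $P'$ compatibly with the combinatorial equivalence. I do not anticipate a genuine obstacle: the proof is essentially bookkeeping on top of Theorem~\ref{th_simple}, the only substantive inputs being (i), (ii), and the strict convexity that keeps $\mathrm{aff}(F_1)$ and $\mathrm{aff}(F_2)$ from coinciding along an edge of $\gamma$.
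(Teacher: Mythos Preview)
Your proposal is correct and follows exactly the approach the paper indicates: it explicitly says the proof of Theorem~\ref{th_almost_simple} ``can be carried out similarly to the above proof of Theorem~\ref{th_simple}'' and leaves it as an exercise, which you have filled in faithfully. The only cosmetic difference is that in the propagation step the paper's proof of Theorem~\ref{th_simple} checks that $A_x$ and $A_y$ agree at the four affinely independent points $x,x_1,x_2,x_3$, whereas you argue they agree on the two distinct planes $\mathrm{aff}(F_1)$ and $\mathrm{aff}(F_2)$; these are equivalent formulations of the same observation.
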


The proof of Theorem~\ref{th_almost_simple} can be carried out similarly to the above proof of Theorem~\ref{th_simple}. 
We leave it to the reader as an easy exercise.
For every $n\geqslant 4$, any $n$-gonal trapezohedron may serve as an example of a polyhedron that satisfies the 
conditions of Theorem~\ref{th_almost_simple}, but is not simple.
Recall that an $n$-gonal trapezohedron is the convex polyhedron dual to an $n$-gonal convex antiprism, 
see~\cite{GS82}, \cite{SA15}. 
Fig.~\ref{fig2}, a shows a 4-gonal antiprism; Fig.~\ref {fig2}, b shows a 4-gonal trapezohedron; and Fig.~\ref{fig2}, c 
shows a 5-sided non-closed edge path $\gamma$ on a 4-gonal trapezohedron satisfying the conditions of 
Theorem~\ref{th_almost_simple}.

\begin{figure}
\begin{center}
\includegraphics[width=0.9\textwidth]{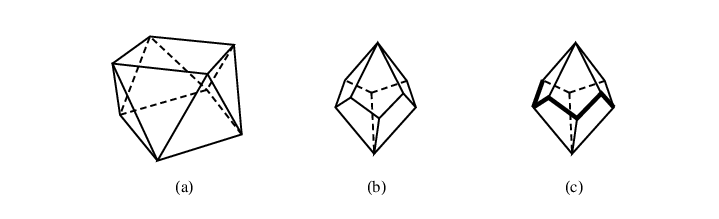}
\end{center}
\caption{(a): A 4-gonal antiprism. (b): A 4-gonal trapezohedron. (c): A solid broken line represents
a 5-sided non-closed edge path $\gamma$ on a 4-gonal trapezohedron, 
satisfying the conditions of Theorem~\ref{th_almost_simple}.}\label{fig2}
\end{figure}

Theorem \ref{th_simple} allows us to prove the following statement:

\begin{thrm}\label{th_dense}
For $n\geqslant 4$, let $M_n$ be the set of all strictly convex closed polyhedra with $n$ faces in $\mathbb{R}^3$
endowed with the Hausdorff metric.
There is an open dense subset $\Omega_n\subset M_n$ such that, for any polyhedra $P,P'\in\Omega_n$, 
if $P$ and $P'$ are combinatorially equivalent and every two corresponding faces are co-affine then
$P$ and $P'$ are co-affine.
\end{thrm}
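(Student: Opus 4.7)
The plan is to take $\Omega_n$ to be the set of simple polyhedra in $M_n$, i.e., those strictly convex closed polyhedra in $\mathbb{R}^3$ with $n$ faces in which every vertex is incident to exactly three faces. The equivalence (i) $\Leftrightarrow$ (ii) on $\Omega_n$ is then essentially a restatement of Theorem~\ref{th_simple}: the implication (ii) $\Rightarrow$ (i) is trivial, since an affine isomorphism between two combinatorially equivalent polyhedra sends each face of $P$ onto the corresponding face of $P'$ affinely, and the implication (i) $\Rightarrow$ (ii) is exactly Theorem~\ref{th_simple} applied to the simple polyhedra $P, P' \in \Omega_n$. So the whole content of Theorem~\ref{th_dense} is that $\Omega_n$ is open and dense in $M_n$.

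To verify openness, I would use that an element of $M_n$ is determined by its family of $n$ face-planes $\pi_1,\dots,\pi_n$, and that convergence in the Hausdorff metric corresponds to convergence of the outward unit normals and signed distances of these planes (for strictly convex closed polyhedra, each face-plane is recoverable from the polyhedron as a supporting plane of a two-dimensional face). The condition that $P$ lie in $\Omega_n$ translates into the following open conditions on $(\pi_1,\dots,\pi_n)$: (a) no four of the $\pi_i$ pass through a common point (simplicity), (b) no two adjacent $\pi_i$ are coplanar (strict convexity), and (c) each $\pi_i$ actually cuts out a two-dimensional face of the intersection of the closed half-spaces (so the face count stays equal to $n$). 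Since each of these conditions is preserved under sufficiently small perturbations of the planes, $\Omega_n$ is open in $M_n$.

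To verify density, I would start with an arbitrary $P\in M_n$ and its face-planes $\pi_1,\dots,\pi_n$. The set of $n$-tuples of planes violating simplicity is a finite union of real-algebraic subvarieties of codimension at least one (each imposed by the concurrence of four of the planes), so arbitrarily small perturbations $\pi_i \mapsto \pi_i'$ can be chosen to land outside this locus. Conditions (b) and (c) above are open and hold for $(\pi_1,\dots,\pi_n)$, so they persist for $(\pi_1',\dots,\pi_n')$ when the perturbation is small enough. The resulting polyhedron $P'$ is then a simple, strictly convex closed polyhedron with exactly $n$ faces, and $P' \to P$ in the Hausdorff metric as the perturbation shrinks; thus $\Omega_n$ is dense in $M_n$.

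The main technical obstacle is the density step, and more precisely verifying that a small perturbation of the face-planes does not change the number of faces. What needs to be controlled is that (i) no face-plane becomes redundant (its would-be face shrinks to a point or disappears), and (ii) no face splits into several because a previously flat dihedral angle becomes non-flat. Both pitfalls are ruled out simultaneously by observing that on the subspace of $(\mathbb{R}\textrm{P}^3)^n$ corresponding to polyhedra in $M_n$, the requirement ``each $\pi_i$ contributes a two-dimensional face and no two adjacent faces are coplanar'' is open; so it is enough to perform the perturbation within a neighborhood on which this openness is uniform. The rest of the argument is a straightforward Hausdorff-continuity check, which I would leave as a routine verification.
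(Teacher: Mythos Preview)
Your proposal is correct and follows essentially the same approach as the paper: take $\Omega_n$ to be the simple polyhedra in $M_n$, invoke Theorem~\ref{th_simple} for the equivalence, and argue that simplicity is open and dense by perturbing the face-planes. The paper's own proof is much terser---it simply asserts openness and for density says one can ``slightly move the planes of the faces parallel to themselves so that only three faces are incident to each vertex''---so your write-up actually supplies more of the topological detail (and the face-count stability concern you flag) than the original does.
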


\begin{proof}
Denote by $\Omega_n$ the set of all simple strictly convex closed polyhedra in $\mathbb {R}^3$ with $n$ faces.
$\Omega_n$ is an open everywhere dense subset of $M_n$.
To make sure of this, it suffice to slightly move the planes of the faces parallel to themselves so that only 
three faces are incident to each vertex.
For $P,P'\in\Omega_n$, the conclusion of Theorem~\ref{th_dense} follows from Theorem~\ref{th_simple}.
\end{proof}

In a sense, Theorem~\ref{th_dense} is similar to the statement that, in the space of all closed simplicial polyhedra of a given 
combinatorial structure in $\mathbb{R}^3$, there is an open dense set, each point of which corresponds to a rigid polyhedron.
The last statement was first proved by H.~Gluck in~\cite{GL75}.
Recently, Gluck's proof has been adapted to prove the rigidity of almost all triangulated circle polyhedra, see~\cite{BBK19b}.

\section{Suspensions}\label{sec4}

As stated in Section~\ref{sec1}, our study of Problem~\ref{main_probl} is mainly motivated by the Cauchy rigidity theorem.
The history of this theorem began in 1794, when in the first edition of the 
famous textbook by A.M.~Legendre~\cite[Note XII, pp. 321--334] {Le94}, it was formulated as a conjecture, while its proofs 
were given only for some classes of polyhedra, including octahedra.
In~\cite[Section 1]{Sa11}, I.Kh.~Sabitov writes that in editions 2nd to 9th a part of Note XII 
related to the rigidity theorem was removed by Legendre, 
and restored only in the 10th and subsequent editions after Cauchy in 1813 proved the rigidity theorem in the general 
case~\cite{Ca13}, making significant use of Legendre's ideas.
Therefore, we consider it appropriate to investigate the different classes of polyhedra for which we can make 
progress in solving Problem~\ref{main_probl}.
In doing so, we would first of all like to find arguments suitable for answering Problem~\ref{main_probl} for octahedra.

In this Section we study suspensions, i.\,e., polyhedra combinatorially equivalent to a regular convex $n$-gonal bipyramid for 
$n\geqslant 3$.
Note that suspensions are not necessarily convex or self-intersection free.
From the edges of a suspension $P$ one can form a closed cycle $L$, which passes without repetition through 
all vertices of $P$, except for two.
Denote the vertices of $L$ by $x_j$, $j=1,\dots,n$, labeled in the cyclic order prescribed by $L$.
The cycle $L$ is called the \textit{equator} of the suspension $P$.
Two vertices of $P$ that do not lie on the equator are denoted by $x_0$ and $x_{n+1}$ and are called the \textit{southern} 
and \textit{north poles} of the suspension, respectively.
From what has been said, it is clear that each vertex of the equator is incident to both the south and north poles,
while the south and north poles are not incident to each other.
For $n=4$, a suspension is combinatorially equivalent to an octahedron.
A 5-gonal suspension is shown in Fig.~\ref{fig3}, a.
Suspensions play an important role in the theory of flexible polyhedra; for example, they were studied in the articles
\cite{Co78}, \cite{Sa90}, \cite{Ma91}, \cite{St00}, \cite{Mi01}, \cite{Sl13}, \cite{Ga15}.

\begin{figure}
\begin{center}
\includegraphics[width=0.9\textwidth]{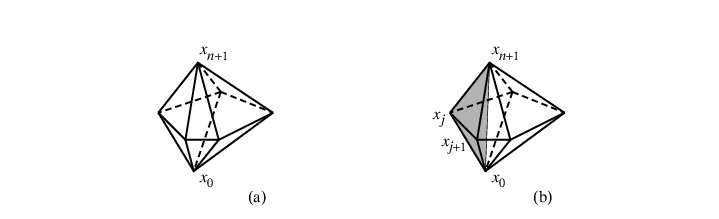}
\end{center}
\caption{(a): An $n$-gonal suspension for $n=5$. (b): The tetrahedron $T_j$ with the vertices 
$x_0$, $x_j$, $x_{j+1}$, and $x_{n+1}$ of the suspension is grayed out; the thin line segment 
$x_0x_{n+1}$ is an edge of $T_j$, but is not an edge of the suspension.}\label{fig3}
\end{figure}

Let $P$ be a suspension with $n$ vertices on the equator.
Let $T_j$ denote a tetrahedron with vertices $x_0$, $x_j$, $x_{j+1}$, and $x_{n+1}$, if $j=1,2,\dots,n-1$, 
and with vertices $x_0$, $x_n$, $x_1$, and $x_{n+1}$, if $j=n$, see Fig.~\ref{fig3}, b.
Note that although the segment $x_0x_{n + 1}$ is an edge of $T_j$, it is nevertheless not an edge of $P$.
In other words, $x_0x_{n+1}$ is the only edge of $T_j$ that we cannot find directly from the natural development of $P$.
Define polynomials $q_j(t)$, $j=1,2,\dots, n$, by the formulas
\begin{equation}\label{eqn4:1}
q_j(t)\stackrel{\textrm{def}}{=}
\begin{cases}
\left|
\begin{array}{ccccc}
0 & 1              & 1               & 1               & 1              \\
1 & 0              & [\rho(\widetilde{x}_0,\widetilde{x}_j)]^2    & [\rho(\widetilde{x}_0,\widetilde{x}_{j+1})]^2    & t^2   \\
1 & [\rho(\widetilde{x}_j,\widetilde{x}_0)]^2   & 0               & [\rho(\widetilde{x}_j,\widetilde{x}_{j+1})]^2  & [\rho(\widetilde{x}_j,\widetilde{x}_{n+1})]^2 \\
1 & [\rho(\widetilde{x}_{j+1},\widetilde{x}_0)]^2   & [\rho(\widetilde{x}_{j+1},\widetilde{x}_j)]^2  & 0               & [\rho(\widetilde{x}_{j+1},\widetilde{x}_{n+1})]^2 \\
1 & t^2   & [\rho(\widetilde{x}_{n+1},\widetilde{x}_j)]^2  & [\rho(\widetilde{x}_{n+1},\widetilde{x}_{j+1})]^2  & 0 
\end{array}
\right|,
\ 
\textrm{if}
\ 
j=1,\dots, n-1;\\
  \\
\left|
\begin{array}{ccccc}
0 & 1              & 1               & 1               & 1              \\
1 & 0              & [\rho(\widetilde{x}_0,\widetilde{x}_n)]^2    & [\rho(\widetilde{x}_0,\widetilde{x}_1)]^2    & t^2   \\
1 & [\rho(\widetilde{x}_n,\widetilde{x}_0)]^2   & 0               & [\rho(\widetilde{x}_n,\widetilde{x}_1)]^2  & [\rho(\widetilde{x}_n,\widetilde{x}_{n+1})]^2 \\
1 & [\rho(\widetilde{x}_1,\widetilde{x}_0)]^2   & [\rho(\widetilde{x}_1,\widetilde{x}_n)]^2  & 0               & [\rho(\widetilde{x}_1,\widetilde{x}_{n+1})]^2 \\
1 & t^2   & [\rho(\widetilde{x}_{n+1},\widetilde{x}_n)]^2  & [\rho(\widetilde{x}_{n+1},\widetilde{x}_1)]^2  & 0 
\end{array}
\right|,
\ 
\textrm{if}
\ 
j=n.
\end{cases}
\end{equation}
Since $\rho$ denotes the distance in the natural development of $P$, 
for any pair $\{x, y\}$ of vertices of $T_j$ other than the pair $\{x_0, x_{n+1}\}$, 
the equality $\rho(\widetilde{x}, \widetilde{y}) = d(x,y)$ holds true.
Therefore, putting by definition $t_*=d(x_0, x_{n+1})$ and comparing formulas (\ref{eqn4:1}) and (\ref{eqn2:1}), we get
\begin{equation}\label{eqn4:2}
q_j(t_*)=
\begin{cases}
\textrm{cm}\,(x_0, x_j, x_{j+1}, x_{n+1}), 
\ 
\textrm{if}
\ 
j=1,2,\dots, n-1;\\
\textrm{cm}\,(x_0, x_n, x_1, x_{n+1}), 
\ 
\textrm{if}
\ 
j=n.
\end{cases}
\end{equation}
Now comparing formulas (\ref{eqn4:2}) and (\ref{eqn2:2}), for any $j= 1,2,\dots,n$ we get the following expression 
for the volume of $T_j$:
\begin{equation}\label{eqn4:3}
[\textrm{vol\,}(T_j)]^2=\frac{1}{2^3 3!}q_j(t_*).
\end{equation} 

Let $P'$ be another suspension with $n$ vertices on the equator, for which a combinatorial equivalence with 
the suspension $P$ is fixed, i.\,e., an incidence-preserving mapping is given which maps the set of elements (i.\,e., 
vertices, edges and faces) of $P'$ onto the set of elements of $P$.
As usual, we denote by $x'_j$ the vertex of $P'$, which corresponds to the vertex $x_j$ of $P$ due to this combinatorial equivalence.
Let $T'_j$ denote the tetrahedron with vertices $x'_0$, $x'_j$, $x'_{j+1}$, $x'_{n+1}$, if $j=1,2,\dots, n-1$, 
and with vertices $x'_0$, $x'_n$, $x'_1$, $x'_{n+1}$, if $j=n$.
Note that segment $x'_0x'_{n+1}$ is the only edge of $T'_j$, which we cannot find directly from the natural development of $P'$.
We define polynomials $q'_j (t')$, $j=1,2,\dots, n$, by the formulas
\begin{equation}\label{eqn4:4} 
q'_j(t') \stackrel{\textrm{def}}{=}
\begin{cases}
\left|
\begin{array}{ccccc}
0 & 1         & 1               & 1               & 1              \\
1 & 0         & [\rho'(\widetilde{x}'_0,\widetilde{x}'_j)]^2    & [\rho'(\widetilde{x}'_0,\widetilde{x}'_{j+1})]^2    & [t']^2   \\
1 & [\rho'(\widetilde{x}'_j,\widetilde{x}'_0)]^2   & 0               & [\rho'(\widetilde{x}'_j,\widetilde{x}'_{j+1})]^2  & [\rho'(\widetilde{x}'_j,\widetilde{x}'_{n+1})]^2 \\
1 & [\rho'(\widetilde{x}'_{j+1},\widetilde{x}'_0)]^2   & [\rho'(\widetilde{x}'_{j+1},\widetilde{x}'_j)]^2  & 0               & [\rho'(\widetilde{x}'_{j+1},\widetilde{x}'_{n+1})]^2 \\
1 & [t']^2   & [\rho'(\widetilde{x}'_{n+1},\widetilde{x}'_j)]^2  & [\rho'(\widetilde{x}'_{n+1},\widetilde{x}'_{j+1})]^2  & 0 
\end{array}
\right|,
\ 
\textrm{if}
\ 
j=1,\dots, n-1;\\
  \\
\left|
\begin{array}{ccccc}
0 & 1           & 1               & 1               & 1              \\
1 & 0           & [\rho'(\widetilde{x}'_0,\widetilde{x}'_n)]^2    & [\rho'(\widetilde{x}'_0,\widetilde{x}'_1)]^2    & [t']^2   \\
1 & [\rho'(\widetilde{x}'_n,\widetilde{x}'_0)]^2   & 0               & [\rho'(\widetilde{x}'_n,\widetilde{x}'_1)]^2  & [\rho'(\widetilde{x}'_n,\widetilde{x}'_{n+1})]^2 \\
1 & [\rho'(\widetilde{x}'_1,\widetilde{x}'_0)]^2   & [\rho'(\widetilde{x}'_1,\widetilde{x}'_n)]^2  & 0               & [\rho'(\widetilde{x}'_1,\widetilde{x}'_{n+1})]^2 \\
1 & [t']^2   & [\rho'(\widetilde{x}'_{n+1},\widetilde{x}'_n)]^2  & [\rho'(\widetilde{x}'_{n+1},\widetilde{x}'_1)]^2  & 0 
\end{array}
\right|,
\ 
\textrm{if}
\ 
j=n.
\end{cases}
\end{equation}
Since $\rho'$ denotes the distance in the natural development of $P'$, then for any pair $\{x',y'\}$ of 
vertices of $T'_j$ other than the pair $\{x'_0, x '_{n+1}\}$, the equality $\rho'(\widetilde{x}', \widetilde{y}')=d(x',y')$ 
is true.
Therefore, putting by definition $t'_*=d(x'_0,x'_{n+1})$ and comparing formulas (\ref{eqn4:4}) and (\ref{eqn2:1}), we get
\begin{equation}\label{eqn4:5}
q'_j(t'_*)=
\begin{cases}
\textrm{cm}\,(x'_0, x'_j, x'_{j+1}, x'_{n+1}), 
\ 
\textrm{if}
\ 
j=1,2,\dots, n-1;\\
\textrm{cm}\,(x'_0, x'_n, x'_1, x'_{n+1}), 
\ 
\textrm{if}
\ 
j=n.
\end{cases}
\end{equation}
Now comparing formulas (\ref{eqn4:5}) and (\ref{eqn2:2}), for any $j= 1,2,\dots,n$ we get the following expression 
for the volume of $T'_j$:
\begin{equation}\label{eqn4:6}
[\textrm{vol\,}(T'_j)]^2=\frac{1}{2^3 3!}q'_j(t'_*).
\end{equation} 

Note that the polynomials $q_j$ and $q'_j$, defined by formulas (\ref{eqn4:1}) and (\ref{eqn4:4}), are uniquely 
determined by the natural developments of suspensions $P$ and $P'$.
Using these polynomials we can formulate the following partial answer to Problem~\ref{main_probl} for suspensions:

\begin{thrm}\label{th_suspension}
Let $P$ and $P'$  be suspensions in $\mathbb {R}^3$.
Suppose $P$ and $P'$ have the same number of vertices $n\geqslant 3$ on the equator and,
for each $j=1,2, \dots, n$, polynomials $q_j$ and $q'_j$ are defined by (\ref{eqn4:1}) and (\ref{eqn4:4}) 
via natural developments of $P$ and $P'$.
Suppose also that the system of $n\geqslant 3$ algebraic equations
\begin{equation}\label{eqn4:7}
q'_j(t')=\delta q_j(t), \qquad j=1,2,\dots,n,
\end{equation} 
with respect to three variables $\delta$, $t$, and $t '$ has no positive real solution
(i.\,e. such a solution that $\delta>0$, $t>0$, and $t'>0$).
Then $P$ and $P'$ are not co-affine.
\end{thrm}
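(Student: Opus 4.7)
\medskip

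The plan is to prove the contrapositive: assuming $P$ and $P'$ are affine-equivalent, I will exhibit an explicit positive real solution of the system (\ref{eqn4:7}), thereby contradicting the hypothesis.

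Suppose $P$ and $P'$ are affine-equivalent, and let $A:\mathbb{R}^3\to\mathbb{R}^3$ be a nondegenerate affine map carrying $P$ onto $P'$ and each vertex $x_k$ of $P$ to the corresponding vertex $x'_k$ of $P'$ ($k=0,1,\dots,n+1$). Let $L$ denote the linear part of $A$, so that $\det L\neq 0$. Since $A$ sends the four vertices of each tetrahedron $T_j$ to the four vertices of $T'_j$, we have $A(T_j)=T'_j$, and hence the standard scaling rule for volumes under an affine map gives
\begin{equation*}
\textrm{vol\,}(T'_j)=|\det L|\cdot\textrm{vol\,}(T_j), \qquad j=1,2,\dots,n.
\end{equation*}
Squaring this identity and invoking formulas (\ref{eqn4:3}) and (\ref{eqn4:6}), we obtain
\begin{equation*}
q'_j(t'_*)=(\det L)^2\, q_j(t_*), \qquad j=1,2,\dots,n,
\end{equation*}
where $t_*=d(x_0,x_{n+1})$ and $t'_*=d(x'_0,x'_{n+1})$ as in the setup preceding (\ref{eqn4:2}) and (\ref{eqn4:5}).

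It remains to observe that the triple $(\delta,t,t')=\bigl((\det L)^2,\,t_*,\,t'_*\bigr)$ is a \emph{positive} real solution of the system (\ref{eqn4:7}). Indeed, $\det L\neq 0$ implies $\delta=(\det L)^2>0$; the distinctness of the poles $x_0$ and $x_{n+1}$ of the suspension $P$ (which is part of the definition of a suspension, since these are two distinct vertices) yields $t_*>0$, and the same holds for $t'_*$. Thus the system (\ref{eqn4:7}) admits a positive real solution, contradicting the assumption of the theorem. Therefore $P$ and $P'$ cannot be affine-equivalent.

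I do not expect any real obstacle in this argument: the whole point of introducing the polynomials $q_j$, $q'_j$ via the Cayley--Menger determinants is precisely that they encode the squared volumes of the tetrahedra $T_j$, $T'_j$ as functions of the one distance (pole-to-pole) that is \emph{not} readable from the natural development. The only minor subtlety is to verify that the affine map $A$ actually sends $T_j$ to $T'_j$ vertex-by-vertex, but this is immediate from the definition of affine equivalence given in Section~\ref{sec2}, since $A$ respects the combinatorial equivalence between $P$ and $P'$.
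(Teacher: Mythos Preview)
Your argument is correct and essentially identical to the paper's own proof: both assume affine equivalence, use the volume-scaling identity $\textrm{vol\,}(T'_j)=|\det L|\cdot\textrm{vol\,}(T_j)$ together with formulas (\ref{eqn4:3}) and (\ref{eqn4:6}) to deduce $q'_j(t'_*)=(\det L)^2\,q_j(t_*)$, and then observe that $\bigl((\det L)^2,\,t_*,\,t'_*\bigr)$ is a positive real solution of (\ref{eqn4:7}). Your explicit remark that $t_*,t'_*>0$ because the poles are distinct is a nice touch that the paper leaves implicit.
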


\begin{proof}
We argue by contradiction.
Suppose that $P$ and $P'$ are co-affine and denote by $A:\mathbb{R}^3\to\mathbb{R}^3$ 
the corresponding co-affine transformation (in particular, the latter means that $P'=A(P)$).
By definition, put $\delta_*=(\det A)^2$, $t_*=d(x_0,x_{n+1})$, and $t'_*=d(x'_0,x'_{n+1})$.
According to (\ref{eqn4:6}) and (\ref{eqn4:3}), for every $j=1,2, \dots, n$, we have
\begin{equation*}
q'_j(t'_*)=2^3\cdot 3!\cdot[\textrm{vol\,}(T'_j)]^2
=2^3\cdot 3!\cdot(\det A)^2[\textrm{vol\,}(T_j)]^2
=\delta_* q_j(t_*).
\end{equation*}
Therefore, the triplet of positive real numbers $\delta_*$, $t_*$, $t'_*$ is a solution to (\ref{eqn4:7}).
Thus, we have come to a contradiction with the conditions of Theorem~\ref{th_suspension}, which completes the proof.
\end{proof}

\section{Local realizability problem for a natural development}\label{sec5}

In \cite{Sa90}, an algorithm is proposed to determine if a given suspension is flexible.
This gave us the idea to look for an algorithmic solution to Problem~\ref{main_probl}.
In Section~\ref{sec6}, we describe an algorithm that receives two combinatorially equivalent natural developments at the input, 
and at the output either guarantees that no polyhedra with such natural developments are co-affine, 
or answers that it cannot give such a guarantee.
Section~\ref{sec5} is devoted to the necessary preparatory work.

Recall that, in Section~\ref{sec2}, we denoted by $\widetilde{x}$ the vertex of the natural development of 
a polyhedron corresponding to the vertex $x$ of that polyhedron.
In Sections~\ref{sec5} and~\ref{sec6}, we use the same notation even if only the development is given 
and the polyhedron has yet to be found (or it has to be proved that the required polyhedron does not exist).

A set $Z$ of three faces of an abstract development $R$ is called a \textit{patch} of $R$ incident to 
a vertex $\widetilde{x}_0$ of $R$ 
if each of the faces of $Z$ is incident to $\widetilde{x}_0$ and $Z$ is connected, i.\,e., that
we can go from any face of $Z$ to any other face of $Z$ by crossing edges of $Z$, not vertices.
The proofs of the following properties of patches are straightforward and are left to the reader: 
every patch is a development; 
every patch is homeomorphic to a disk; 
$\widetilde{x}_0$ can be either an inner or a boundary point of a patch incident to $\widetilde{x}_0$.

Our approach to an algorithmic solution of Problem~\ref{main_probl} is based on reducing it to the following problem:

\begin{probl}\label{local_probl}
\emph{Let abstract developments $R$ and $R'$ be combinatorially equivalent to each other, 
$\widetilde{x} _0$ be a vertex of $R$, 
$Z$ be a patch of $R$ incident to $\widetilde{x}_0$, 
and $Z'$ be the patch of $R'$ corresponding to $Z$ due to the combinatorial equivalence of $R$ and $R'$.
Suppose also that any face of $Z$ is co-affine to the corresponding face of $Z'$.
The problem is whether there exist polyhedra $P$ and $P'$ in $\mathbb{R}^3$ such that
(i)  $P$ and $P'$ are homeomorphic to the disc;
(ii)  $Z$ is the natural development of $P$;
(iii) $Z'$ is the natural development of $P'$;
(iv) $P$ and $P'$ are co-affine.}
\end{probl}

For the convenience of speech, property (ii) is expressed by the words ``a natural development $Z$ 
is isometrically realized as a polyhedron $P$.''
Note that Problem~\ref{local_probl} deals with the possibility of realizing only patches $Z$ and $Z'$ as 
co-affine polyhedra, regardless of whether it is possible to realize ``ambient'' abstract developments $R$ and $R'$.
Therefore, Problem~\ref{local_probl} can be called local.

Let $\widetilde{x}_1, \widetilde{x}_2, \widetilde {x}_3$ be sequentially numbered vertices of a patch $Z$ incident 
to a vertex $\widetilde{x}_0$ of an abstract development $R$, and let $n$ be the total number of vertices of $R$ 
incident to $\widetilde{x}_0$.
Let us study Problem~\ref{local_probl} for each of the following cases separately:
$n=3$ (see Fig.~\ref{fig4}, a);
$n=4$ (see Fig.~\ref{fig4}, b);  
$n\geqslant 5$ (see Fig.~\ref{fig4}, c).
\begin{figure}
\begin{center}
\includegraphics[width=0.9\textwidth]{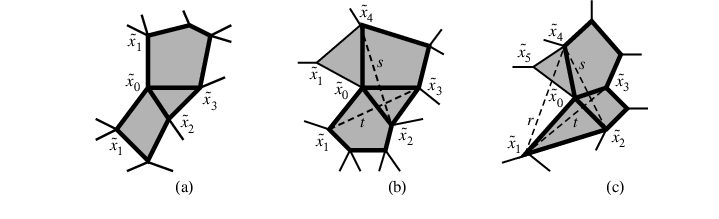}
\end{center}
\caption{Schematic representation of the star (grayed out) of a vertex $\widetilde{x}_0$ of valency $n$ in an 
abstract development $R$ (part of which is shown by solid lines) and a patch $Z$ incident to $\widetilde{x}_0$ 
(depicted in bold lines).
The dotted lines show the segments that are not edges of either $R$ or $Z$; their lengths are taken as 
``free parameters'' $r, s, t$.
(a): The case $n=3$. (b): The case $n=4$. (c): The case $n\geqslant 5$.
}\label{fig4}
\end{figure}

\textbf{Case $\bf{n=3}$.}
Suppose that there is a polyhedron $P$ in $\mathbb{R}^3$ for which $Z$ is a natural development.
Then $P$ contains the star of a vertex $x_0$ of a nondegenerate tetrahedron $T$ with vertices $x_0$, $x_1$, $x_2$, $x_3$.
As usual, let us denote by $\rho(\widetilde{x}, \widetilde{y})$ the distance in $R$ between points $\widetilde{x}$ 
and $\widetilde{y}$.
Since we have assumed that $Z$ is a natural development of $P$, for any pair $\{x, y\}$ of vertices of $T$, 
the equality $\rho(\widetilde{x}, \widetilde{y}) = d(x,y)$ holds true, where $d(x,y)$ denotes the Euclidean 
distance between the points $x, y\in\mathbb{R}^3$.
(There is no need to assume that $x$ and $y$ are connected by an edge of $P$; 
it is sufficient that they are incident to one and the same face of $P$.)
Therefore, the Cayley--Menger determinant~(\ref{eqn2:1}) of $T$ can be expressed in terms of the distances on $R$:
\begin{equation*}
\textrm{cm}\,(x_0, x_1, x_2, x_3)  = 
\left|
\begin{array}{ccccc}
0 & 1                 & 1               & 1                & 1               \\
1 & 0                 & [d(x_0,x_1)]^2  & [d(x_0,x_2)]^2   & [d(x_0,x_3)]^2  \\
1 & [d(x_1,x_0)]^2    & 0               & [d(x_1,x_2)]^2   & [d(x_1,x_3)]^2  \\
1 & [d(x_2,x_0)]^2    & [d(x_2,x_1)]^2  & 0                & [d(x_2,x_3)]^2  \\
1 & [d(x_3,x_0)]^2    & [d(x_3,x_1)]^2  & [d(x_3,x_2)]^2   & 0 
\end{array}
\right| = 
\end{equation*}
\begin{equation}\label{eqn5:1}
   =  \left|
\begin{array}{ccccc}
0 & 1                 & 1               & 1                & 1               \\
1 & 0                 & [\rho(\widetilde{x}_0,\widetilde{x}_1)]^2  & [\rho(\widetilde{x}_0,\widetilde{x}_2)]^2   & [\rho(\widetilde{x}_0,\widetilde{x}_3)]^2  \\
1 & [\rho(\widetilde{x}_1,\widetilde{x}_0)]^2    & 0               & [\rho(\widetilde{x}_1,\widetilde{x}_2)]^2   & [\rho(\widetilde{x}_1,\widetilde{x}_3)]^2  \\
1 & [\rho(\widetilde{x}_2,\widetilde{x}_0)]^2    & [\rho(\widetilde{x}_2,\widetilde{x}_1)]^2  & 0                & [\rho(\widetilde{x}_2,\widetilde{x}_3)]^2  \\
1 & [\rho(\widetilde{x}_3,\widetilde{x}_0)]^2    & [\rho(\widetilde{x}_3,\widetilde{x}_1)]^2  & [\rho(\widetilde{x}_3,\widetilde{x}_2)]^2   & 0 
\end{array}
\right|
\stackrel{\textrm{def}}{=}{}^3q.
\end{equation} 
Note that $T$ is non-degenerate because we have assumed that $Z$ is the natural development of $P$ and 
because no two adjacent faces of any polyhedron lie in the same plane.
Therefore, according to~(\ref{eqn2:3}), the number ${}^3q$ defined by formula~(\ref{eqn5:1}) is positive.
Moreover, in the case $n=3$, the inequality ${}^3q>0$ is necessary and sufficient for $Z$ to 
be isometrically realizable as the natural development of some polyhedron $P$, i.\,e., for property (ii) 
from the statement of Problem~\ref{local_probl} to be fulfilled.
 
Similarly, putting by definition
\begin{equation}\label{eqn5:2}
\left|
\begin{array}{ccccc}
0 & 1                 & 1               & 1                & 1               \\
1 & 0                 & [\rho'(\widetilde{x}'_0,\widetilde{x}'_1)]^2  & [\rho'(\widetilde{x}'_0,\widetilde{x}'_2)]^2   & [\rho'(\widetilde{x}'_0,\widetilde{x}'_3)]^2  \\
1 & [\rho'(\widetilde{x}'_1,\widetilde{x}'_0)]^2    & 0               & [\rho'(\widetilde{x}'_1,\widetilde{x}'_2)]^2   & [\rho'(\widetilde{x}'_1,\widetilde{x}'_3)]^2  \\
1 & [\rho'(\widetilde{x}'_2,\widetilde{x}'_0)]^2    & [\rho'(\widetilde{x}'_2,\widetilde{x}'_1)]^2  & 0                & [\rho'(\widetilde{x}'_2,\widetilde{x}'_3)]^2  \\
1 & [\rho'(\widetilde{x}'_3,\widetilde{x}'_0)]^2    & [\rho'(\widetilde{x}'_3,\widetilde{x}'_1)]^2  & [\rho'(\widetilde{x}'_3,\widetilde{x}'_2)]^2   & 0 
\end{array}
\right|
\stackrel{\textrm{def}}{=}{}^3q',
\end{equation} 
we make sure that the inequality ${}^3q'>0$ is necessary and sufficient for $Z'$ to 
be isometrically realizable as the natural development of some polyhedron $P'$, i.\,e., for property (iii) 
from the statement of Problem~\ref{local_probl} to be fulfilled.

It was already noted above that tetrahedron $T$ with vertices $x_0$, $x_1$, $x_2$, $x_3$ is non-degenerate, 
i.\,e., does not lie in any plane in $\mathbb{R}^3$.
Similarly, we can assert that tetrahedron $T'$ with vertices $x'_0$, $x'_1$, $x'_2$, $x'_3$ is also non-degenerate.
Hence, there exists uniquely determined affine transformation $A:\mathbb{R}^3\to\mathbb{R}^3$ 
such that $A(x_j)=x'_j$ for all $j=0, \dots, 3$.
The fact that $A$ maps $P$ onto $P'$ was already established by us in the proof of Theorem~\ref{th_simple}.
Moreover, we can find the absolute value of the Jacobian of $A$ by the formula
\begin{equation}\label{eqn5:3}
|\det A| = \textrm{vol\,}(T'_j)/\textrm{vol\,}(T_j)= \sqrt{{}^3q'/{}^3q}.
\end{equation}
This formula shows that, for $n=3$, the assignment of patches $Z$ and $Z'$ uniquely determines the value of $|\det A|$.

Thus, we have proved the following lemma:
\begin{lemma}\label{lemma_1}
In the case $n=3$, the answer to Problem~\emph{\ref{local_probl}} is positive if and only if both 
numbers ${}^3q$ and ${}^3q'$ given by (\ref{eqn5:1}) and (\ref{eqn5:2}) are positive.
\end{lemma}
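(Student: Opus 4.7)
The plan is to prove both directions of the if-and-only-if; both follow by organizing material already present in the discussion that precedes the lemma statement.

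For necessity, suppose polyhedra $P$ and $P'$ exist satisfying conditions (i)--(iv) of Problem~\ref{local_probl}. The three faces of $P$ incident to $x_0$ are pairwise adjacent (each two share one of the edges $x_0 x_1$, $x_0 x_2$, $x_0 x_3$), so no two of them are coplanar. Consequently the four vertices $x_0, x_1, x_2, x_3$ are affinely independent and form a non-degenerate tetrahedron $T$; property~(b) of the Cayley--Menger determinant, in the form of inequality~(\ref{eqn2:3}), then gives ${}^3q>0$, and the same argument applied to $P'$ yields ${}^3q'>0$.

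For sufficiency, I assume ${}^3q>0$ and ${}^3q'>0$ and apply property~(b) of the Cayley--Menger determinant in the reverse direction to realize non-degenerate tetrahedra $T$ and $T'$ in $\mathbb{R}^3$ with vertices $x_0,\dots,x_3$ and $x'_0,\dots,x'_3$ attaining the prescribed mutual distances coming from $Z$ and $Z'$. Write $F_1, F_2, F_3$ for the three faces of $Z$, with $F_i$ containing $\widetilde{x}_0, \widetilde{x}_i, \widetilde{x}_{i+1}$ among its vertices (indices taken cyclically). Because the mutual distances between $\widetilde{x}_0, \widetilde{x}_i, \widetilde{x}_{i+1}$ in $Z$ match those between $x_0, x_i, x_{i+1}$ in $\mathbb{R}^3$, there is a unique isometric embedding $\psi_i: \textrm{aff}(F_i)\to\mathbb{R}^3$ into the affine plane through $x_0, x_i, x_{i+1}$ that sends each of the three vertices to its 3D counterpart. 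I take $P := \psi_1(F_1)\cup \psi_2(F_2)\cup \psi_3(F_3)$; it is homeomorphic to a disc and has $Z$ as its natural development. The polyhedron $P'$ is constructed analogously.

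To finish I must verify that $P$ and $P'$ are affine-equivalent. Since $T$ and $T'$ are non-degenerate, there is a unique affine transformation $A:\mathbb{R}^3\to\mathbb{R}^3$ with $A(x_j)=x'_j$ for $j=0,\dots,3$, and I claim $A(\psi_i(F_i)) = \psi'_i(F'_i)$ for every $i$. The affine-equivalence hypothesis on the corresponding faces of $Z$ and $Z'$ supplies a 2D affine map $B_i: \textrm{aff}(F_i)\to\textrm{aff}(F'_i)$ with $B_i(F_i)=F'_i$ sending $\widetilde{x}_0,\widetilde{x}_i,\widetilde{x}_{i+1}$ to $\widetilde{x}'_0,\widetilde{x}'_i,\widetilde{x}'_{i+1}$. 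The two affine maps $A\circ\psi_i$ and $\psi'_i\circ B_i$ from $\textrm{aff}(F_i)$ into $\mathbb{R}^3$ then agree on the three non-collinear points $\widetilde{x}_0,\widetilde{x}_i,\widetilde{x}_{i+1}$, hence coincide. Therefore $A(\psi_i(F_i))=\psi'_i(B_i(F_i))=\psi'_i(F'_i)$, and taking the union over $i$ gives $A(P)=P'$. I expect the whole argument to be a local version of the proof of Theorem~\ref{th_simple} specialized to a single vertex star of valency $3$; the main technical point deserving attention is the uniqueness of the planar isometric embedding once three non-collinear image points are prescribed, which will be justified by the fact that a planar isometry is determined by its action on three non-collinear points.
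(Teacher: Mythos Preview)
Your proof is correct and follows essentially the same approach as the paper, which simply refers back to the discussion preceding the lemma (where ${}^3q>0$ is shown to be necessary and sufficient for realizability of $Z$, and the argument of Theorem~\ref{th_simple} is invoked to produce the affine equivalence). You are more explicit than the paper in the sufficiency direction, spelling out the construction of $P$ and $P'$ via the planar isometries $\psi_i$ and verifying $A(P)=P'$, but the underlying ideas are identical.
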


\begin{proof}
The ``if'' statement was proven above.
To prove the opposite statement, one need to do the same reasoning in reverse order.
\end{proof}
 
\textbf{Case $\bf{n=4}$.}
The corresponding patch $Z$ is shown schematically in Fig.~\ref{fig4}, b.
Suppose that there is a polyhedron $P$ in $\mathbb{R}^3$ for which $Z$ is a natural development.
Then $P$ contains the star of a vertex $x_0$ of an octahedron $O$ for which $x_0$ is the south pole, 
and the points $x_1$, $x_2$, $x_3$, $x_4$ lie on the equator.
Since we assumed that $Z$ is a natural development of $P$, then for any two-point set 
$\{x, y\}\subset\{x_0, x_1, x_2, x_3, x_4\}$, except for the pairs $\{x_1, x_3\}$ and $\{x_2, x_4\}$, 
the equality $\rho(\widetilde{x}, \widetilde{y}) = d(x,y)$ holds true.
(There is no need to assume that $x$ and $y$ are connected by an edge of $P$; 
it is sufficient that they are incident to one and the same face of $P$.)
The segments $x_1x_3$ and $x_2x_4$ are usually called \textit{small diagonals} of $P$.
Their lengths in $\mathbb {R}^3$ will be denoted by $t$ and $s$, respectively, and will be 
called \textit{free parameters}, since they are not expressed in terms of distances in $R$.

For each $j=0,1, \dots, 4$, we denote by $T_j$ the tetrahedron which is the convex hull of the set
$\{x_0, x_1, x_2, x_3, x_4\}\setminus \{x_j\}$. 
In the Cayley--Menger determinant (\ref{eqn2:1}) for $T_j$, replace the Euclidean distances $d_{ij}=d(x_i,x_j)$ 
by the following expressions:
\begin{equation*}
d(x_i,x_j)=
\begin{cases}
\rho(\widetilde{x}_i, \widetilde{x}_j), & \textrm{if\ }  (i,j)\notin \{(1,3), (2,4), (3,1), (4,2)\}; \\
t, & \textrm{if\ }  (i,j)\in \{(1,3), (3,1)\}; \\
s, & \textrm{if\ }  (i,j)\in \{(2,4), (4,2)\}.
\end{cases}
\end{equation*}
The resulting polynomial depends on either $t$, or $s$, or both free parameters $t$ and $s$ at once. 
As a result, we get:
\begin{equation}\label{eqn5:4}
{}^4q_0(t,s)   \stackrel{\textrm{def}}{=}  \left|
\begin{array}{ccccc}
0 & 1                 & 1               & 1                & 1               \\
1 & 0                 & [\rho(\widetilde{x}_1,\widetilde{x}_2)]^2  & t^2   & [\rho(\widetilde{x}_1,\widetilde{x}_4)]^2  \\
1 & [\rho(\widetilde{x}_2,\widetilde{x}_1)]^2    & 0               & [\rho(\widetilde{x}_2,\widetilde{x}_3)]^2   & s^2  \\
1 & t^2    & [\rho(\widetilde{x}_3,\widetilde{x}_2)]^2  & 0                & [\rho(\widetilde{x}_3,\widetilde{x}_4)]^2  \\
1 & [\rho(\widetilde{x}_4,\widetilde{x}_1)]^2    & s^2  & [\rho(\widetilde{x}_4,\widetilde{x}_3)]^2   & 0 
\end{array}
\right|,
\end{equation}
\begin{equation}\label{eqn5:5}
{}^4q_1(s)   \stackrel{\textrm{def}}{=}  \left|
\begin{array}{ccccc}
0 & 1                 & 1               & 1                & 1               \\
1 & 0                 & [\rho(\widetilde{x}_0,\widetilde{x}_2)]^2  & [\rho(\widetilde{x}_0,\widetilde{x}_3)]^2   & [\rho(\widetilde{x}_0,\widetilde{x}_4)]^2  \\
1 & [\rho(\widetilde{x}_2,\widetilde{x}_0)]^2    & 0               & [\rho(\widetilde{x}_2,\widetilde{x}_3)]^2   & s^2  \\
1 & [\rho(\widetilde{x}_3,\widetilde{x}_0)]^2    & [\rho(\widetilde{x}_3,\widetilde{x}_2)]^2  & 0                & [\rho(\widetilde{x}_3,\widetilde{x}_4)]^2  \\
1 & [\rho(\widetilde{x}_4,\widetilde{x}_0)]^2    & s^2  & [\rho(\widetilde{x}_4,\widetilde{x}_3)]^2   & 0 
\end{array}
\right|,
\end{equation}
\begin{equation}\label{eqn5:6}
{}^4q_2(t)   \stackrel{\textrm{def}}{=}  \left|
\begin{array}{ccccc}
0 & 1                 & 1               & 1                & 1               \\
1 & 0                 & [\rho(\widetilde{x}_0,\widetilde{x}_1)]^2  & [\rho(\widetilde{x}_0,\widetilde{x}_3)]^2   & [\rho(\widetilde{x}_0,\widetilde{x}_4)]^2  \\
1 & [\rho(\widetilde{x}_1,\widetilde{x}_0)]^2    & 0               & t^2   & [\rho(\widetilde{x}_1,\widetilde{x}_4)]^2  \\
1 & [\rho(\widetilde{x}_3,\widetilde{x}_0)]^2    & t^2  & 0                & [\rho(\widetilde{x}_3,\widetilde{x}_4)]^2  \\
1 & [\rho(\widetilde{x}_4,\widetilde{x}_0)]^2    & [\rho(\widetilde{x}_4,\widetilde{x}_1)]^2  & [\rho(\widetilde{x}_4,\widetilde{x}_3)]^2   & 0 
\end{array}
\right|,
\end{equation}
\begin{equation}\label{eqn5:7}
{}^4q_3(s)   \stackrel{\textrm{def}}{=}  \left|
\begin{array}{ccccc}
0 & 1                 & 1               & 1                & 1               \\
1 & 0                 & [\rho(\widetilde{x}_0,\widetilde{x}_1)]^2  & [\rho(\widetilde{x}_0,\widetilde{x}_2)]^2   & [\rho(\widetilde{x}_0,\widetilde{x}_4)]^2  \\
1 & [\rho(\widetilde{x}_1,\widetilde{x}_0)]^2    & 0               & [\rho(\widetilde{x}_1,\widetilde{x}_3)]^2   & [\rho(\widetilde{x}_1,\widetilde{x}_4)]^2  \\
1 & [\rho(\widetilde{x}_2,\widetilde{x}_0)]^2    & [\rho(\widetilde{x}_2,\widetilde{x}_1)]^2  & 0                & s^2  \\
1 & [\rho(\widetilde{x}_4,\widetilde{x}_0)]^2    & [\rho(\widetilde{x}_4,\widetilde{x}_1)]^2  & s^2   & 0 
\end{array}
\right|,
\end{equation}
\begin{equation}\label{eqn5:8}
{}^4q_4(t)   \stackrel{\textrm{def}}{=}  \left|
\begin{array}{ccccc}
0 & 1                 & 1               & 1                & 1               \\
1 & 0                 & [\rho(\widetilde{x}_0,\widetilde{x}_1)]^2  & [\rho(\widetilde{x}_0,\widetilde{x}_2)]^2   & [\rho(\widetilde{x}_0,\widetilde{x}_3)]^2  \\
1 & [\rho(\widetilde{x}_1,\widetilde{x}_0)]^2    & 0               & [\rho(\widetilde{x}_1,\widetilde{x}_2)]^2 & t^2   \\
1 & [\rho(\widetilde{x}_2,\widetilde{x}_0)]^2    & [\rho(\widetilde{x}_2,\widetilde{x}_1)]^2 & 0                & [\rho(\widetilde{x}_2,\widetilde{x}_3)]^2  \\
1 & [\rho(\widetilde{x}_3,\widetilde{x}_0)]^2    & t^2  & [\rho(\widetilde{x}_3,\widetilde{x}_3)]^2   & 0 
\end{array}
\right|.
\end{equation}

For each $j=0,1, \dots, 4$, denote by $T'_j$ the tetrahedron which is the convex hull of the set
$\{x'_0, x'_1, x'_2, x'_3, x'_4\}\setminus \{x'_j\}$. 
By definition, put $d'_{ij}=d(x'_i, x'_j)$.
In the Cayley--Menger determinant (\ref{eqn2:1}) for $T'_j$, replace the Euclidean distances $d(x'_i, x'_j)$ by the following expressions:
\begin{equation*}
d(x'_i,x'_j)=
\begin{cases}
\rho'(\widetilde{x}'_i, \widetilde{x}'_j), & \textrm{if\ }  (i,j)\notin \{(1,3), (2,4), (3,1), (4,2)\}; \\
t', & \textrm{if\ }  (i,j)\in \{(1,3), (3,1)\}; \\
s', & \textrm{if\ }  (i,j)\in \{(2,4), (4,2)\}.
\end{cases}
\end{equation*}
Here $\rho'$ denotes the distance in $R'$, while $t'$, $s'$ are new free parameters.
As a result, we get polynomials ${}^4q'_0(t',s')$, ${}^4q'_1(s')$, ${}^4q'_2(t')$, ${}^4q'_3(s')$, and ${}^4q'_4(t')$,
given by formulas (\ref{eqn5:4})--(\ref{eqn5:8}), in which the letters $\rho$, $\widetilde{x}_0$, $\widetilde{x}_1$,
$\widetilde{x}_2$, $\widetilde{x}_3$, $\widetilde{x}_4$, $t$, and $s$ are replaced by the same letters endowed 
with the prime symbol. 

If the answer to Problem~\ref{local_probl} is positive, then there exists uniquely determined affine transformation 
$A:\mathbb{R}^3\to\mathbb{R}^3$  such that $A(x_j)=x'_j$ for all $j=0, \dots, 4$.
This follows from our assumptions: patch $Z$ is the natural development of the polyhedron $P$;
the faces of $P$ are non-degenerate; and no adjacent faces lie in the same plane.
Therefore, we have
\begin{equation}\label{eqn5:9}
\begin{cases}
|\det A|^2 & = \dfrac{(\textrm{vol\,}T'_0)^2}{(\textrm{vol\,}T_0)^2}
= \dfrac{{}^4q'_0(d'_{13}, d'_{24})}{{}^4q_0(d_{13}, d_{24})},\\
|\det A|^2 & = \dfrac{(\textrm{vol\,}T'_1)^2}{(\textrm{vol\,}T_1)^2}
= \dfrac{{}^4q'_1(d'_{24})}{{}^4q_1(d_{24})},\\
|\det A|^2 & = \dfrac{(\textrm{vol\,}T'_2)^2}{(\textrm{vol\,}T_2)^2}
= \dfrac{{}^4q'_2(d'_{13})}{{}^4q_2(d_{13})},\\
|\det A|^2 & = \dfrac{(\textrm{vol\,}T'_3)^2}{(\textrm{vol\,}T_3)^2}
= \dfrac{{}^4q'_3(d'_{24})}{{}^4q_3(d_{24})},\\
|\det A|^2 & = \dfrac{(\textrm{vol\,}T'_4)^2}{(\textrm{vol\,}T_4)^2}
= \dfrac{{}^4q'_4(d'_{13})}{{}^4q_4(d_{13})}.
\end{cases}
\end{equation}

The above results can be summarized as follows:

\begin{lemma}\label{lemma_2} 
Suppose $n=4$ and the answer to Problem~\emph{\ref{local_probl}} is positive. 
Then the system of algebraic equations
\begin{equation}\label{eqn5:14}
\begin{cases}
{}^4q'_0(t', s') &=  {}^4q_0(t, s)\alpha,\\
{}^4q'_1(s')     &=  {}^4q_1(s)\alpha,\\
{}^4q'_2(t')     &=  {}^4q_2(t)\alpha,\\
{}^4q'_3(s')     &=  {}^4q_3(s)\alpha,\\
{}^4q'_4(t')     &=  {}^4q_4(t)\alpha
\end{cases}
\end{equation}
has a solution $\alpha, t, s, t', s'$ such that $\alpha>0$, $t>0$, $s>0$, $t'>0$, $s'>0$.
\end{lemma}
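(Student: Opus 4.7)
The plan is to produce an explicit positive solution of system~(\ref{eqn5:14}) from the affine transformation whose existence the hypothesis guarantees. Suppose Problem~\ref{local_probl} has a positive answer; by condition~(iv) and the definition of affine-equivalence given in Section~\ref{sec2}, there is a nondegenerate affine map $A:\mathbb{R}^3\to\mathbb{R}^3$ with $A(P)=P'$ carrying each vertex $x_j$ of $P$ to the corresponding vertex $x'_j$ of $P'$. I take as candidate solution
\begin{equation*}
\alpha=(\det A)^2,\ \ t=d(x_1,x_3),\ \ s=d(x_2,x_4),\ \ t'=d(x'_1,x'_3),\ \ s'=d(x'_2,x'_4).
\end{equation*}
Positivity of $\alpha$ is immediate from the nondegeneracy of $A$; positivity of $t,s,t',s'$ follows because distinct polyhedral vertices remain distinct in $\mathbb{R}^3$.

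The central step is to verify that these values satisfy each of the five equations. For this I use the tetrahedra $T_j$ and $T'_j$ introduced just before the lemma. The polynomials ${}^4q_j$ and ${}^4q'_j$ are designed so that, after substituting the true small-diagonal lengths for the free parameters, every matrix entry becomes a squared Euclidean distance between the corresponding vertices in $\mathbb{R}^3$: for the small-diagonal slots this is built into the choice of $t,s,t',s'$, while for every other pair the natural-development property of $Z$ and $Z'$ (combined with the fact that such a pair lies in a common face) yields $\rho(\widetilde{x}_i,\widetilde{x}_j)=d(x_i,x_j)$ and the analogue on the primed side. Consequently ${}^4q_j(t,s)=\textrm{cm}\,(T_j)$ and ${}^4q'_j(t',s')=\textrm{cm}\,(T'_j)$, where only the free parameters relevant to the particular $T_j$ actually appear on the left-hand side according to~(\ref{eqn5:4})--(\ref{eqn5:8}).

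To close the argument I appeal to property~(a) of the Cayley--Menger determinant, i.e., to formula~(\ref{eqn2:2}) with $k=3$, which rewrites these identities as $[\textrm{vol\,}(T_j)]^2=\tfrac{1}{48}\,{}^4q_j(t,s)$ and $[\textrm{vol\,}(T'_j)]^2=\tfrac{1}{48}\,{}^4q'_j(t',s')$. Since $A$ sends the four vertices of $T_j$ bijectively to the four vertices of $T'_j$, it maps $T_j$ affinely onto $T'_j$, hence $\textrm{vol\,}(T'_j)=|\det A|\cdot \textrm{vol\,}(T_j)$ and therefore $[\textrm{vol\,}(T'_j)]^2=\alpha\cdot[\textrm{vol\,}(T_j)]^2$. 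Clearing the common factor $48$ yields ${}^4q'_j(t',s')=\alpha\cdot {}^4q_j(t,s)$ for every $j=0,1,2,3,4$, which is precisely system~(\ref{eqn5:14}) at the chosen values.

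The part of the argument requiring the most care is the bookkeeping step identifying matrix entries: one must confirm pair by pair that among $\{x_0,\dots,x_4\}$ the only pairs for which $\rho(\widetilde{x}_i,\widetilde{x}_j)\neq d(x_i,x_j)$ are exactly $\{x_1,x_3\}$ and $\{x_2,x_4\}$, so that the free-parameter slots in ${}^4q_j$ line up with these two small diagonals and no others, and symmetrically for $Z'$. Once this matching is verified, the remainder is mechanical and no genuine mathematical difficulty arises; the lemma then follows.
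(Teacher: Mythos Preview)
Your proof is correct and follows essentially the same approach as the paper: the paper's own proof simply sets $\alpha=|\det A|^2$, $t=d_{13}$, $s=d_{24}$, $t'=d'_{13}$, $s'=d'_{24}$ and invokes the volume relations~(\ref{eqn5:9}) derived in the preceding discussion, which is exactly the argument you have spelled out in more detail.
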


\begin{proof}
Put $\alpha=|\det A|^2$, $t=d_{13}$, $s=d_{24}$, $t'=d'_{13}$, $s'=d'_{24}$ 
and use formulas (\ref{eqn5:9}).
\end{proof}

Note that system~(\ref{eqn5:14}) contains 5 equations for 5 unknowns $\alpha, t, s, t', s'$.
Therefore, we can be expect that in a ``generic situation'' it has only a finite number of solutions.

\textbf{Case $\bf{n\geqslant 5}$.}
The corresponding patch $Z$ is shown schematically in Fig.~\ref{fig4}, c.
Suppose that there is a polyhedron $P$ in $\mathbb{R}^3$ for which $Z$ is a natural development.
Then the star of the vertex $x_0$ of $P$ contains vertices $x_j$, $j=1,\dots,5$, 
and each segment $x_0x_j$ is an edge of $P$.
Since we assumed that $Z$ is a natural development of $P$ then, for any two-point set 
$\{x, y\}\subset\{x_0, x_1, x_2, x_3, x_4\}$, except for the pairs $\{x_1, x_3\}$, $\{x_2, x_4\}$, and $\{x_1,x_4\}$, 
the equality $\rho(\widetilde{x}, \widetilde{y}) = d(x,y)$ holds true.
(There is no need to assume that $x$ and $y$ are connected by an edge of $P$; 
it is sufficient that they are incident to one and the same face of $P$.)
As usual, the segments $x_1x_3$, $x_2x_4$, and $x_1x_4$ are called \textit{small diagonals} of $P$.
Their lengths in $\mathbb {R}^3$ will be denoted by $t$, $s$, and $r$ respectively, and will be 
called \textit{free parameters}, since they are not expressed in terms of distances in $R$.

For each $j=0,1, \dots, 4$, we denote by $T_j$ the tetrahedron which is the convex hull of the set
$\{x_0, x_1, x_2, x_3, x_4\}\setminus \{x_j\}$.  
In the Cayley--Menger determinant (\ref{eqn2:1}) for $T_j$, replace the Euclidean distances $d_{ij}=d(x_i,x_j)$ 
by the following expressions:
\begin{equation*}
d(x_i,x_j)=
\begin{cases}
\rho(\widetilde{x}_i, \widetilde{x}_j), & \textrm{if\ }  (i,j)\notin \{(1,3), (2,4), (3,1), (4,2), (1,4), (4,1)\}; \\
t, & \textrm{if\ }  (i,j)\in \{(1,3), (3,1)\}; \\
s, & \textrm{if\ }  (i,j)\in \{(2,4), (4,2)\}; \\
r, & \textrm{if\ }  (i,j)\in \{(1,4), (4,1)\}.
\end{cases}
\end{equation*}
The resulting polynomial depends on one or few free parameters $t$, $s$, and $r$. 
As a result, we get:
\begin{equation}\label{eqn5:15}
{}^5q_0(t,s,r)   \stackrel{\textrm{def}}{=}  \left|
\begin{array}{ccccc}
0 & 1                 & 1               & 1                & 1               \\
1 & 0                 & [\rho(\widetilde{x}_1,\widetilde{x}_2)]^2  & t^2   & r^2  \\
1 & [\rho(\widetilde{x}_2,\widetilde{x}_1)]^2    & 0               & [\rho(\widetilde{x}_2,\widetilde{x}_3)]^2   & s^2  \\
1 & t^2    & [\rho(\widetilde{x}_3,\widetilde{x}_2)]^2  & 0                & [\rho(\widetilde{x}_3,\widetilde{x}_4)]^2  \\
1 & r^2    & s^2  & [\rho(\widetilde{x}_4,\widetilde{x}_3)]^2   & 0 
\end{array}
\right|,
\end{equation}
\begin{equation}\label{eqn5:16}
{}^5q_1(s)   \stackrel{\textrm{def}}{=}  \left|
\begin{array}{ccccc}
0 & 1                 & 1               & 1                & 1               \\
1 & 0                 & [\rho(\widetilde{x}_0,\widetilde{x}_2)]^2  & [\rho(\widetilde{x}_0,\widetilde{x}_3)]^2   & [\rho(\widetilde{x}_0,\widetilde{x}_4)]^2  \\
1 & [\rho(\widetilde{x}_2,\widetilde{x}_0)]^2    & 0               & [\rho(\widetilde{x}_2,\widetilde{x}_3)]^2   & s^2  \\
1 & [\rho(\widetilde{x}_3,\widetilde{x}_0)]^2    & [\rho(\widetilde{x}_3,\widetilde{x}_2)]^2  & 0                & [\rho(\widetilde{x}_3,\widetilde{x}_4)]^2  \\
1 & [\rho(\widetilde{x}_4,\widetilde{x}_0)]^2    & s^2  & [\rho(\widetilde{x}_4,\widetilde{x}_3)]^2   & 0 
\end{array}
\right|,
\end{equation}
\begin{equation}\label{eqn5:17}
{}^5q_2(t,r)   \stackrel{\textrm{def}}{=}  \left|
\begin{array}{ccccc}
0 & 1                 & 1               & 1                & 1               \\
1 & 0                 & [\rho(\widetilde{x}_0,\widetilde{x}_1)]^2  & [\rho(\widetilde{x}_0,\widetilde{x}_3)]^2   & [\rho(\widetilde{x}_0,\widetilde{x}_4)]^2  \\
1 & [\rho(\widetilde{x}_1,\widetilde{x}_0)]^2    & 0               & t^2   & r^2  \\
1 & [\rho(\widetilde{x}_3,\widetilde{x}_0)]^2    & t^2  & 0                & [\rho(\widetilde{x}_3,\widetilde{x}_4)]^2  \\
1 & [\rho(\widetilde{x}_4,\widetilde{x}_0)]^2    & r^2  & [\rho(\widetilde{x}_4,\widetilde{x}_3)]^2   & 0 
\end{array}
\right|,
\end{equation}
\begin{equation}\label{eqn5:18}
{}^5q_3(s,r)   \stackrel{\textrm{def}}{=}  \left|
\begin{array}{ccccc}
0 & 1                 & 1               & 1                & 1               \\
1 & 0                 & [\rho(\widetilde{x}_0,\widetilde{x}_1)]^2  & [\rho(\widetilde{x}_0,\widetilde{x}_2)]^2   & [\rho(\widetilde{x}_0,\widetilde{x}_4)]^2  \\
1 & [\rho(\widetilde{x}_1,\widetilde{x}_0)]^2    & 0               & [\rho(\widetilde{x}_1,\widetilde{x}_3)]^2   & r^2  \\
1 & [\rho(\widetilde{x}_2,\widetilde{x}_0)]^2    & [\rho(\widetilde{x}_2,\widetilde{x}_1)]^2  & 0                & s^2  \\
1 & [\rho(\widetilde{x}_4,\widetilde{x}_0)]^2    & r^2  & s^2   & 0 
\end{array}
\right|,
\end{equation}
\begin{equation}\label{eqn5:19}
{}^5q_4(t)   \stackrel{\textrm{def}}{=}  \left|
\begin{array}{ccccc}
0 & 1                 & 1               & 1                & 1               \\
1 & 0                 & [\rho(\widetilde{x}_0,\widetilde{x}_1)]^2  & [\rho(\widetilde{x}_0,\widetilde{x}_2)]^2   & [\rho(\widetilde{x}_0,\widetilde{x}_3)]^2  \\
1 & [\rho(\widetilde{x}_1,\widetilde{x}_0)]^2    & 0               & [\rho(\widetilde{x}_1,\widetilde{x}_2)]^2 & t^2   \\
1 & [\rho(\widetilde{x}_2,\widetilde{x}_0)]^2    & [\rho(\widetilde{x}_2,\widetilde{x}_1)]^2 & 0                & [\rho(\widetilde{x}_2,\widetilde{x}_3)]^2  \\
1 & [\rho(\widetilde{x}_3,\widetilde{x}_0)]^2    & t^2  & [\rho(\widetilde{x}_3,\widetilde{x}_3)]^2   & 0 
\end{array}
\right|.
\end{equation}

Five points $x_0, x_1, x_2, x_3, x_4$ lying in $\mathbb{R}^3$ can be considered as the vertices of 
a degenerate 4-dimensional simplex~$T$.
Its 4-dimensional volume is equal to zero.
Therefore, replacing the edge lengths in the Cayley--Menger determinant (\ref{eqn2:1}) for~$T$ 
by either the corresponding distances in the patch $Z$ or the free parameters $t$, $s$, $r $, we obtain
\begin{equation}\label{eqn5:20}
{}^5q(t,s,r)   \stackrel{\textrm{def}}{=}  \left|
\begin{array}{cccccc}
0 & 1                 & 1               & 1                & 1           & 1    \\
1 & 0                 & [\rho(\widetilde{x}_0,\widetilde{x}_1)]^2  & [\rho(\widetilde{x}_0,\widetilde{x}_2)]^2   & [\rho(\widetilde{x}_0,\widetilde{x}_3)]^2  & [\rho(\widetilde{x}_0,\widetilde{x}_4)]^2 \\
1 & [\rho(\widetilde{x}_1,\widetilde{x}_0)]^2    & 0               & [\rho(\widetilde{x}_1,\widetilde{x}_2)]^2 & t^2 & r^2  \\
1 & [\rho(\widetilde{x}_2,\widetilde{x}_0)]^2    & [\rho(\widetilde{x}_2,\widetilde{x}_1)]^2 & 0                & [\rho(\widetilde{x}_2,\widetilde{x}_3)]^2 & s^2  \\
1 & [\rho(\widetilde{x}_3,\widetilde{x}_0)]^2    & t^2  & [\rho(\widetilde{x}_3,\widetilde{x}_2)]^2   & 0 
& [\rho(\widetilde{x}_3,\widetilde{x}_4)]^2 \\
1 & [\rho(\widetilde{x}_4,\widetilde{x}_0)]^2    & r^2  & s^2 & [\rho(\widetilde{x}_4,\widetilde{x}_3)]^2  & 0 
\end{array}
\right|=0.
\end{equation}

For each $j=0,1, \dots, 4$, we denote by $T'_j$ the tetrahedron which is the convex hull of the set
$\{x'_0, x'_1, x'_2, x'_3, x'_4\}\setminus \{x'_j\}$. 
In the Cayley--Menger determinant (\ref{eqn2:1}) for $T'_j$ and $T'$, replace the Euclidean distances 
$d'_{ij}=d(x'_i,x'_j)$ by the following expressions:
\begin{equation*}
d(x'_i,x'_j)=
\begin{cases}
\rho'(\widetilde{x}'_i, \widetilde{x}'_j), & \textrm{if\ }  (i,j)\notin \{(1,3), (2,4), (3,1), (4,2)\}; \\
t', & \textrm{if\ }  (i,j)\in \{(1,3), (3,1)\}; \\
s', & \textrm{if\ }  (i,j)\in \{(2,4), (4,2)\}; \\
r', & \textrm{if\ }  (i,j)\in \{(1,4), (4,1)\}.
\end{cases}
\end{equation*} 
Here $\rho'$ denotes the distance in $Z'$, while $t'$, $s'$, $r'$ are new free parameters.
As a result, we get polynomials ${}^5q'_0(t',s',r')$, ${}^5q'_1(s')$, ${}^5q'_2(t',r')$, ${}^5q'_3(s',r')$, ${}^5q'_4(t')$, 
and ${}^5q'(t',s',r')$, given by formulas (\ref{eqn5:15})--(\ref{eqn5:20}), 
in which the letters $\rho$, $\widetilde{x}_0$, $\widetilde{x}_1$,
$\widetilde{x}_2$, $\widetilde{x}_3$, $\widetilde{x}_4$, $t$, $s$, and $r$ are replaced by the same letters endowed with 
the prime symbol.

If the answer to Problem~\ref{local_probl} for a patch $Z$ is positive, 
then there exists uniquely determined affine transformation 
$A:\mathbb{R}^3\to\mathbb{R}^3$  such that $A(x_j)=x'_j$ for all $j=0, \dots, 4$.
This follows from our assumptions: patch $Z$ is the natural development of the polyhedron $P$;
the faces of $P$ are non-degenerate; and no adjacent faces lie in the same plane.
Therefore, we have
\begin{equation}\label{eqn5:21}
\begin{cases}
|\det A|^2 & = \dfrac{(\textrm{vol\,}T'_0)^2}{(\textrm{vol\,}T_0)^2}
= \dfrac{{}^4q'_0(d'_{13}, d'_{24})}{{}^4q_0(d_{13}, d_{24})},\\
|\det A|^2 & = \dfrac{(\textrm{vol\,}T'_1)^2}{(\textrm{vol\,}T_1)^2}
= \dfrac{{}^4q'_1(d'_{24})}{{}^4q_1(d_{24})},\\
|\det A|^2 & = \dfrac{(\textrm{vol\,}T'_2)^2}{(\textrm{vol\,}T_2)^2}
= \dfrac{{}^4q'_2(d'_{13})}{{}^4q_2(d_{13})},\\
|\det A|^2 & = \dfrac{(\textrm{vol\,}T'_3)^2}{(\textrm{vol\,}T_3)^2}
= \dfrac{{}^4q'_3(d'_{24})}{{}^4q_3(d_{24})},\\
|\det A|^2 & = \dfrac{(\textrm{vol\,}T'_4)^2}{(\textrm{vol\,}T_4)^2}
= \dfrac{{}^4q'_4(d'_{13})}{{}^4q_4(d_{13})},\\
{}^5q(t,s,r) & = 0,\\
{}^5q'(t',s',r') & = 0.
\end{cases}
\end{equation}

The above results obtained for the case  $n\geqslant 5$ can be summarized as follows:

\begin{lemma}\label{lemma_3}
Suppose $n\geqslant 5$ and the answer to Problem~\emph{\ref{local_probl}} is positive. 
Then the system of algebraic equations
\begin{equation}\label{eqn5:22}
\begin{cases}
{}^4q'_0(t', s') &=  {}^4q_0(t, s)\alpha,\\
{}^4q'_1(s')     &=  {}^4q_1(s)\alpha,\\
{}^4q'_2(t')     &=  {}^4q_2(t)\alpha,\\
{}^4q'_3(s')     &=  {}^4q_3(s)\alpha,\\
{}^4q'_4(t')     &=  {}^4q_4(t)\alpha, \\
{}^5q(t,s,r)     & = 0,\\
{}^5q'(t',s',r') & = 0.
\end{cases}
\end{equation}
has a solution $\alpha, t, s, r, t', s', r'$ such that $\alpha>0$, $t>0$, $s>0$, $r>0$, $t'>0$, $s'>0$, $r'>0$.
\end{lemma}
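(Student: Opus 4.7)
The plan is to follow the same template as the proof of Lemma~\ref{lemma_2}, with one genuinely new ingredient: the fact that the five points $x_0, x_1, x_2, x_3, x_4$ all live in $\mathbb{R}^3$ makes them vertices of a degenerate $4$-simplex, so formula~(\ref{eqn2:2}) with $k=4$ immediately forces their $5\times 5$ Cayley--Menger determinant to vanish; this is exactly what the two extra equations ${}^5q(t,s,r)=0$ and ${}^5q'(t',s',r')=0$ record.

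First I would suppose the answer to Problem~\ref{local_probl} is positive and let $P$, $P'$, and $A:\mathbb{R}^3\to\mathbb{R}^3$ be the resulting polyhedra and affine transformation, so $A(P)=P'$ and $A(x_j)=x'_j$ for $j=0,\dots,4$. As in the discussion preceding~(\ref{eqn5:21}), the conventions that faces of a polyhedron are nondegenerate and that no two adjacent faces lie in a common plane let me extract four affinely independent points from $\{x_0,\dots,x_4\}$, which guarantees that $A$ is uniquely determined by these conditions. I then define $\alpha=|\det A|^2$, $t=d(x_1,x_3)$, $s=d(x_2,x_4)$, $r=d(x_1,x_4)$, and define $t',s',r'$ analogously from the primed vertices. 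All seven unknowns are strictly positive since vertices of a polyhedron are distinct.

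Next I would verify the first five equations of~(\ref{eqn5:22}) in direct analogy with the proof of Lemma~\ref{lemma_2}. For each $j=0,\dots,4$, since $A$ sends $T_j$ onto $T'_j$ one has $[\textrm{vol\,}(T'_j)]^2=\alpha[\textrm{vol\,}(T_j)]^2$. Applying formula~(\ref{eqn2:2}) with $k=3$ to both sides and using that every edge length of $T_j$ except possibly the small diagonals $x_1x_3$, $x_2x_4$, $x_1x_4$ coincides with the corresponding $\rho$-distance in $Z$ (and similarly in $Z'$), one reads off from the defining formulas~(\ref{eqn5:15})--(\ref{eqn5:19}) the five identities ${}^5q'_j(\cdots)=\alpha\,{}^5q_j(\cdots)$ after the chosen substitution of free parameters. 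For the two remaining equations I would invoke formula~(\ref{eqn2:2}) with $k=4$ applied to the points $x_0,\dots,x_4\in\mathbb{R}^3$: their $4$-volume is zero, so $\textrm{cm}\,(x_0,\dots,x_4)=0$, which upon substituting $d(x_1,x_3)=t$, $d(x_2,x_4)=s$, $d(x_1,x_4)=r$ in~(\ref{eqn5:20}) yields ${}^5q(t,s,r)=0$. The same reasoning on the primed side gives ${}^5q'(t',s',r')=0$.

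As for the main obstacle: there really is none of mathematical substance, since this is a routine extension of Lemma~\ref{lemma_2}. The only mildly delicate point is the well-definedness of $A$, which is why it is essential that the patch $Z$ carries enough combinatorial data to exhibit four affinely independent points among $x_0,\dots,x_4$; the standing nondegeneracy assumptions on faces and dihedral angles do exactly this. Note also that the logical direction of the lemma is only the ``only if'' direction (unlike the biconditional Lemma~\ref{lemma_1}); one should not expect a converse for $n\geqslant 5$, because realizability of the five metric pieces as an actual star in $\mathbb{R}^3$ imposes further, orientation-type conditions beyond the algebraic system~(\ref{eqn5:22}).
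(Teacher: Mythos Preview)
Your proposal is correct and follows essentially the same route as the paper: the paper's proof simply sets $\alpha=|\det A|^2$, $t=d_{13}$, $s=d_{24}$, $r=d_{14}$, $t'=d'_{13}$, $s'=d'_{24}$, $r'=d'_{14}$ and invokes the block of identities~(\ref{eqn5:21}), which is exactly the volume-ratio argument for the first five equations together with the degenerate $4$-simplex observation~(\ref{eqn5:20}) for the last two. Your write-up is more explicit than the paper's one-line proof but contains no additional ideas; your side remark about the lack of a converse is a welcome clarification that the paper does not spell out.
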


\begin{proof}
Put  $\alpha=|\det A|^2$, $t=d_{13}$, $s=d_{24}$, $r=d_{14}$, $t'=d'_{13}$, $s'=d'_{24}$, $r'=d'_{14}$
and use formulas (\ref{eqn5:21}).
\end{proof}

Note that system~(\ref{eqn5:22}) contains 7 equations for 7 unknowns $\alpha, t, s, r, t', s', r'$.
Therefore, we can expect that in a ``generic situation'' it has only a finite number of solutions.

The results on Problem~\ref{local_probl} obtained in Section~\ref{sec5} can be summarized as follows:

\begin{thrm}\label{th_local_probl}
Let $R$ and $R'$ be natural developments of combinatorially eqiuvalent polyhedra $P$ and $P'$ in $\mathbb{R}^3$.
Suppose that $P$ and $P'$ are co-affine and $A:\mathbb{R}^3\to\mathbb{R}^3$ is the corresponding co-affine map.
Then there exists a positive number $\alpha_*$ such that
\par
\emph{(i)} For every patch $Z$ incident to an arbitrary vertex $\widetilde{x}_0$ of valency 3 of $R$,
the equality $\alpha_*=\sqrt{{}^3q'/{}^3q}$ holds true, where the expressions ${}^3q$ and ${}^3q'$ 
are given by the formulas~(\ref{eqn5:1}) and~(\ref{eqn5:2}) respectively.
\par
\emph{(ii)} For every patch $Z$ incident to an arbitrary vertex $\widetilde{x}_0$ of valency 4 of $R$, 
the system of algebraic equations~(\ref{eqn5:14}) has a solution $\alpha, t, s, t', s' $ such that
$\alpha=\alpha_*$ and $t>0$, $s>0$, $t'>0$, $s'>0$. 
\par
\emph{(iii)} For every patch $Z$ incident to an arbitrary vertex $\widetilde{x}_0$ of valency $\geqslant 5$ of $R$, 
the system of algebraic equations~(\ref{eqn5:22}) has a solution $\alpha, t, s, r, t', s', r'$ such that
$\alpha=\alpha_*$ and $t>0$, $s>0$, $r>0$, $t'>0$, $s'>0$, $r'>0$. 
\end{thrm}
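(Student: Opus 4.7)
The plan is to take $\alpha_*$ to be a single invariant of the affine map $A$ — namely (a suitable power of) $|\det A|$ — and then verify each of (i), (ii), (iii) vertex by vertex by applying the corresponding Lemma~\ref{lemma_1}, \ref{lemma_2}, or~\ref{lemma_3}. The whole point of the theorem is that the a priori vertex-dependent constants produced by those three lemmas can all be taken equal to a single global value, and this collapses as soon as one notices that the ratio $[\textrm{vol\,}(A(T))]^2/[\textrm{vol\,}(T)]^2 = (\det A)^2$ is independent of the choice of non-degenerate tetrahedron $T \subset \mathbb{R}^3$.

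For~(i), fix a valency-3 vertex $\widetilde{x}_0$ of $R$, whose patch realizes a tetrahedron $T$ with vertices $x_0, x_1, x_2, x_3$ in $P$ and image $T' = A(T)$ in $P'$. All six pairwise distances among $x_0, \dots, x_3$ (and similarly among $x'_0, \dots, x'_3$) are recorded faithfully by $\rho$ on $R$ (resp.\ by $\rho'$ on $R'$), because each such pair lies on a common face of the patch. Hence the determinants (\ref{eqn5:1}) and (\ref{eqn5:2}) equal $\textrm{cm}\,(T)$ and $\textrm{cm}\,(T')$ respectively, and formula~(\ref{eqn2:2}) together with $\textrm{vol\,}(T') = |\det A|\cdot\textrm{vol\,}(T)$ yields $\sqrt{{}^3q'/{}^3q} = |\det A|$, which is precisely the asserted formula for $\alpha_*$.

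For~(ii) and~(iii), pick the appropriate patch and set the free parameters to be the genuine Euclidean distances in $P$: $t = d(x_1,x_3)$, $s = d(x_2,x_4)$, and (for the valency-5 case) $r = d(x_1,x_4)$, with analogous definitions for $t', s', r'$ in $P'$. These are strictly positive since the corresponding vertex pairs are distinct points in $\mathbb{R}^3$. With these substitutions each polynomial ${}^4q_j$ or ${}^5q_j$ specializes to the Cayley--Menger determinant of the corresponding genuine tetrahedron, and likewise with primes; the systems~(\ref{eqn5:14}) and~(\ref{eqn5:22}) then reduce, via Lemmas~\ref{lemma_2} and~\ref{lemma_3} together with the same volume-ratio argument as above, to the single identity $\alpha = |\det A|^2 = \alpha_*$. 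The two extra equations ${}^5q(t,s,r) = 0$ and ${}^5q'(t',s',r') = 0$ in~(\ref{eqn5:22}) hold for free, because any five points of $\mathbb{R}^3$ form a degenerate $4$-simplex of vanishing four-dimensional volume.

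There is no substantive obstacle; the proof is essentially a consolidation of Lemmas~\ref{lemma_1}--\ref{lemma_3} glued together by the observation that a single global $|\det A|$ controls every vertex simultaneously. The only bookkeeping item worth flagging is matching the square root appearing in~(i) with the unsquared $\alpha$'s appearing in~(ii)--(iii): the meaning of $\alpha_*$ (as $|\det A|$ versus $|\det A|^2$) has to be fixed consistently so that the same symbol verifies all three assertions, and the formulas from Section~\ref{sec5} are then invoked accordingly.
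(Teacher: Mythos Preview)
Your approach is essentially identical to the paper's: set $\alpha_*=|\det A|$, then invoke formula~(\ref{eqn5:3}) for part~(i) and Lemmas~\ref{lemma_2} and~\ref{lemma_3} for parts~(ii) and~(iii). Your closing remark about the $|\det A|$ versus $|\det A|^{2}$ mismatch is well taken---the paper's own proof puts $\alpha_*=|\det A|$ and then appeals to Lemmas~\ref{lemma_2} and~\ref{lemma_3}, whose solutions carry $\alpha=|\det A|^{2}$, so the inconsistency you flag is present in the source and not an artifact of your write-up.
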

\begin{proof}
By definition, put $\alpha_*=|\det A|$.
Statement (i) follows from~(\ref{eqn5:3}).
Moreover, according to Lemma~\ref{lemma_1}, both numbers ${}^3q$ and ${}^3q'$ are strictly positive.
Statements (ii) and (iii) follow from Lemmas~\ref{lemma_2} and~\ref{lemma_3}, respectively.
\end{proof}

Note that, in Theorem~\ref{th_local_probl}, polyhedra $P$ and $P'$ are not assumed to be convex or closed.

\section{Algorithmic solution to the problem of recognition of 
co-affine polyhedra by their natural developments}\label{sec6}

Let $P$ and $P'$ be combinatorially equivalent polyhedra in $\mathbb{R}^3$ that are not assumed to be convex, closed, or co-affine.
And let $R$ and $R'$ be natural developments of $P$ and $P'$.
Let us organize an item-by-item examination of all vertices $\widetilde{x}_0$ of $R$ and all patches $Z$ of $R$ 
incident to $\widetilde{x}_0$.

$\bullet$ If $\widetilde {x}_0$ is a vertex of valency 3, then we put
$\alpha(\widetilde{x}_0, Z) = \sqrt{{}^3q'/{}^3q}$, where expressions ${}^3q$ and ${}^3q'$ are 
defined by (\ref{eqn5:1}) and (\ref{eqn5:2}), respectively.
(Note that numerical values of both expressions ${}^3q$ and ${}^3q'$ are strictly positive. 
According to Lemma~\ref{lemma_1}, this follows from our assumption that $P$ and $P'$ do actually exist, 
and $R$ and $R'$ are their natural developments.)

$\bullet$ If $\widetilde{x}_0$ is a vertex of valency 4, then we distinguish two cases:
if the system of algebraic equations (\ref{eqn5:14}) has a solution $\alpha, t, s, t', s'$ such that
$\alpha> 0$, $t> 0$, $s>0$, $t'>0$, $s'>0$, then we put $\alpha(\widetilde{x}_0, Z) = \alpha$;
and if~(\ref{eqn5:14}) has no solution with the indicated properties, then we put
$\{\alpha(\widetilde{x}_0, Z)\}=\varnothing$.

$\bullet$ If $\widetilde{x}_0$ is a vertex of valency $n\geqslant 5$, then again we distinguish two cases:
If the system of algebraic equations (\ref{eqn5:22}) has a solution $\alpha, t, s, r, t', s', r'$ such that
$\alpha>0$, $t>0$, $s>0$, $r>0$, $t'>0$, $s'>0$, $r'>0$, then we put $\alpha(\widetilde{x}_0, Z) = \alpha$; 
and if~(\ref{eqn5:22}) has no solution with the indicated properties, then we put
$\{\alpha(\widetilde{x}_0, Z)\}=\varnothing$.

$\bullet$ Finally, we find the set $\bigcap\{\alpha(\widetilde{x}_0, Z)\}$, i.\,e., the intersection of the sets 
$\{\alpha(\widetilde{x}_0, Z)\}$, when $\widetilde{x}_0$ runs through the set of all vertices of $R$ 
and $Z$ runs through the set of all patches of $R$ incident to $\widetilde{x}_0$.

Based on Theorem~\ref{th_local_probl}, we assert that if $\bigcap\{\alpha(\widetilde{x}_0, Z)\}=\varnothing$, then 
$P$ and $P'$ are not co-affine.

Note that the algorithm proposed in this Section includes, as subtasks, the solution of systems of polynomial equations
(\ref{eqn5:14}) and (\ref{eqn5:22}) in the set of positive real numbers $\mathbb{R}_+$.
We have nothing to add to the known methods for solving finite systems of polynomial equations.
We only note that, in accordance with the Tarski---Seidenberg theorem \cite{Ta51}, \cite{Se54}, each 
system of this type is algorithmically decidable in the sense that there are algorithms which allow
to calculate a Boolean formula that is equivalent to the original subtask, does not contain quantifiers, 
and whose atoms are polynomial equalities and inequalities.
However, these algorithms cannot be used in modern computers because their computational complexity is too high.
Therefore, we expecrt that, in the practical application of our results, the subproblems consisting in solving systems of polynomial equations (\ref{eqn5:14}) and (\ref{eqn5:22}) in $\mathbb{R}_+$, will be solved numerically.

\section{Concluding remarks}\label{sec7}

In 2022, the article \cite{Al22} was published.
Originally it was conceived by the author as a continuation of the present one.
In \cite{Al22} Problem~\ref{main_probl} is considered for convex octahedra in $\mathbb{R}^3$ only.
It uses basically the same ideas as in the present article.
However, due to the fact that only octahedra are studied in \cite{Al22}, it was possible to prove that 
the necessary conditions for the co-affineity of convex octahedra, expressed in terms of natural developments, 
are in fact also sufficient.
However, the theorems obtained in the present article and in \cite{Al22} do not meet high standards,
given by the classical Cauchy rigidity theorem.
To obtain a more satisfactory solution to Problem~\ref{main_probl}, fundamentally new ideas are apparently required.

In 2016, at the suggestion of the author, several statements similar to Theorems~\ref{th_simple} and~\ref{th_almost_simple},
were proved by A.V. Sherstobitov, who at that time was planning to start postgraduate studies in geometry.
However, this plan remained unfulfilled, and the statements were not published.

The research was carried out within the framework of a state assignment of the Ministry of Education and Science of 
the Russian Federation for the Sobolev Institute of Mathematics of the Siberian Branch of the Russian Academy of Sciences 
(project FWNF-2022-0006).

\end{document}